\newtheorem{te}{Theorem}[section]
\newtheorem{lema}{Lemma}[section]
\newtheorem{cor}{Corollary}[section]
\theoremstyle{definition}
\newtheorem{de}{Definition}[section]
\theoremstyle{remark}
\newtheorem{rem}{Remark}[section]
\newcommand{\sing}{\operatorname{singsupp}}
\def\cN{\mathcal{N}}
\def\R{\mathbb{R}}
\def\N{\mathbb{N}}
\def\la{\langle}
\def\ra{\rangle}
\def\Ren{\mathbb{R}^d}
\def\Renn{\mathbb{R}^{2d}}
\def\cS{\mathcal{S}}
\def\o{\omega}
\def\cM{\mathcal{M}}
\def\inv{^{-1}}
\def\Mmpq{M_m^{p,q}}
\def\phas{(x,\omega )}
\def\Fur{\mathcal{F}}
\def\be{\begin{equation}}
\def\ee{\end{equation}}
\def\bena{\begin{eqnarray*}}
\def\ena{\end{eqnarray*}}
\def\mR{\mathbb{R}}   \def\mC{\mathbb{C}}
 \def\suml{\sum\limits} 
\def\dss{\displaystyle}
 \def\cS{\mathcal{S}}
 \def\D{\mathcal{D}}
 \def\E{\mathcal{E}}
 \def\Z{\mathbf{Z}_+}
\def\N{\mathbf{N}}
\def\lf {\lfloor}
\def\rf{\rfloor}
\def\phas{(x,\omega )}
\newcommand{\modsp}{modulation space}
\def\t{\tau}
\def\s{\sigma}
\def\cS{\mathcal{S}}
\DeclareMathOperator{\WF}{WF}
\theoremstyle{plain}
 \newtheorem{theorem}{Theorem}[section]
 \newtheorem{proposition}{Proposition}[section]
 \newtheorem{lemma}{Lemma}[section]
\theoremstyle{definition}
 \newtheorem{definition}{Definition}[section]
\theoremstyle{remark}
 \newtheorem{remark}{Remark}[section]
 \numberwithin{equation}{section}
\begin{document}

\title[Extended Gevrey regularity ...]
{Extended Gevrey regularity via  the short-time Fourier transform}

\author[Nenad Teofanov, Filip Tomi\'c]{Nenad Teofanov, Filip Tomi\'c}
\address{Nenad Teofanov\\ University of Novi Sad, Faculty of Sciences\\ Department
 of Mathematics and Informatics\\ Trg Dositeja Obradovi\' ca 4\\ 21000 Novi Sad \\ Serbia }
\email{nenad.teofanov@dmi.uns.ac.rs}
\address{Filip Tomi\'c\\ University of Novi Sad, Faculty of Technical Sciences\\ Department
 of Fundamental Sciences\\ Trg Dositeja Obradovi\' ca 6\\ 21000 Novi Sad \\ Serbia }
\email{filip.tomic@uns.ac.rs}

\subjclass[2010]{Primary  46E10, 35A18, Secondary 46F05, 42B10}

\keywords{Gevrey classes, Paley-Wiener theorem, modulation spaces, Wave front sets, ultradistributions}

\begin{abstract}
We study the regularity of smooth functions whose derivatives are dominated by
sequences of the form $M_p^{\tau,\s}=p^{\tau p^{\s}}$, $\tau>0$, $\s\geq1$. We show that such functions can be characterized
through the decay properties of their short-time Fourier transforms (STFT),
and recover \cite[Theorem 3.1]{CNR} as the special case when  $ \t>1$ and $\s = 1$, i.e. when the Gevrey type regularity is considered.
These estimates lead to a Paley-Wiener type
theorem for extended Gevrey classes. In contrast to the related result
from \cite{PTT-05, PTT-04}, here we relax the assumption on compact support of the observed functions. Moreover, we introduce
the corresponding wave front set, recover it in terms of the STFT,
and discuss local regularity in such context.
\end{abstract}

\maketitle

\section{Introduction}

Classes of \emph{extended Gevrey functions} and the corresponding \emph{wave front sets} are introduced and investigated in \cite{PTT-01, PTT-02, PTT-03, TT}. Such classes consist of smooth function, and they are larger than any Gevrey class. This turned out to be important e.g. in the study of
strictly hyperbolic equations, see \cite{CL}.
Paley-Wiener type theorem for compactly supported extended Gevrey regular functions is given in \cite{PTT-05, PTT-04}, and it turns out that the Fourier-Laplace transform of such functions have certain logarithmic decay at infinity which can be expressed in terms of \emph{Lambert W function}. This fact is used to resolve the wave front sets in the context of extended Gevrey regularity. We refer to
\cite{PTT-02, PTT-03} for related theorems on propagation of singularities.

The aim of this paper is twofold. Firstly, we give another version of the Paley-Wiener theorem for extended Gevrey regularity and formulate the result  in terms of the \emph{short time Fourier transform} (STFT) (cf. \cite{Gro}). More precisely, we prove a generalization of \cite[Theorem 3.1]{CNR}, where the STFT estimates are related to Gevrey type regularity, and obtain the Paley-Wiener type result as its corollary. Secondly, we
give a description of (micro)local regularity related to the extended Gevrey regularity by the means of the STFT. This result is inspired by recent characterization of the $C^{\infty}$ wave front sets via the STFT, given in  \cite{PrangoskiPilip}.

The paper is organized as follows: In subsection \ref{notions} we fix some notation and in Section \ref{sec01} we collect the main notions and tools for our analysis:
Subsection \ref{secTest} contains basic facts concerning the extended Gevrey classes. In Subsection \ref{secAsocirana} we introduce the notion of \emph{extended associated function} which appears in the formulation of our main results. The correct asymptotic behavior of the
extended associated function is given by the means of the Lambert $W$ function, see Theorem \ref{propozicija}.
In subsection \ref{Modulation Spaces} we introduce the short-time Fourier transform
and modulation spaces defined by the means of decay and integrability conditions of the STFT of ultradistributions. We also recall some basic properties of modulation spaces.

In Section \ref{secGlavniRez} we prove Theorem \ref{glavnaTeoremaSTFT}. It is a generalization of \cite[Theorem 3.1]{CNR}  which turned out to be important for the properties of  pseudodifferential operators with symbols of Gevrey, analytic and ultra-analytic regularity, see \cite{CNR} for details.
As a corrolary of Theorem \ref{glavnaTeoremaSTFT} we obtain a Paley-Wiener type theorem for element of modulation spaces related to the extended Gevrey classes.
This result extends \cite[Theorem 3.1]{PTT-04} in the sense that the condition on compact support is replaced by appropriate decay property given by a modulation space norm, when the  Fourier-Laplace transform is replaced by the STFT.

In Section \ref{secWFstft} we recall the notion of wave front sets related to extended Gevrey regularity. We prove that such wave front sets can be characterized by the decay properties of the STFT of a distribution with respect to a suitably chosen window function, Theorem \ref{NezavisnostWFThm}. As a consequence we derive a result on local extended Gevrey regularity, Theorem
\ref{NezavisnostWFThm}.

Our results are proved for the so-called Roumieu case, and we note that proofs for the Beurling case  are similar and therefore omitted.

\subsection{Basic notions and notation} \label{notions}
We denote by ${\bf N}$, $\Z$, ${\bf R}$, ${\bf C}$ the sets of nonnegative
integers, positive integers, real numbers and complex numbers, respectively. For $x \in \Ren$
we put  $\langle x \rangle=(1+|x|^2)^{1/2}$.
The integer parts (the floor and the ceiling functions) of  $x\in {\bf R}_+$
are denoted by $\lf x \rf:=\max\{m\in \N\,:\,m\leq x\}$ and $\lceil x \rceil:= \min\{m\in \N\,:\,m\geq x\}$. For a multi-index
$\alpha=(\alpha_1,\dots,\alpha_d)\in {\bf N}^d$ we write
$\partial^{\alpha}=\partial^{\alpha_1}\dots\partial^{\alpha_d}$, $\dss D^{\alpha}= (-i )^{|\alpha|}\partial^\alpha$, and
$|\alpha|=|\alpha_1|+\dots |\alpha_d|$. Open ball of radius $r>0$ centered at $x_0$ is denoted by $B_r(x_0)$.
As usual, $C^\infty  (\mR^d) $ is the space of smooth functions, the Schwartz space of rapidly decreasing functions is denoted by $ \mathcal{S} (\mR^d) $, and $ \mathcal{S}' (\mR^d) $  denotes its dual space of tempered distributions. Lebesgue spaces over an open set $ \Omega \subseteq \Ren$ are denoted by $ L^p (\Omega) $, $ 1\leq p <\infty$, and the norm of $f \in L^p (\Omega) $ is denoted by $\| f\|_{L^p}.$

\par

The Fourier transform is normalized to be $${\hat   {f}}(\o)=\Fur f(\o)=\int f(t)e^{-2\pi i t\o}dt.$$
We use the brackets $\la f,g\ra$ to denote the extension of the inner product $\la f,g\ra=\int f(t){\overline
{g(t)}}dt$ on $L^2(\Ren)$ to the dual pairing between a test function space $ \mathcal A $ and its dual $ {\mathcal A}' $:
$ \langle \cdot, \cdot \rangle  = $ $ _{{\mathcal A}'}\langle \cdot, \overline{\cdot} \rangle _{\mathcal A}.$

\par

Translation and modulation operators, $T$ and $M$ respectively, when acting on $ f \in L^2 (\Ren)$
are defined by
$$
T_x f(\cdot) = f(\cdot - x) \;\;\; \mbox{ and } \;\;\;
 M_x f(\cdot) = e^{2\pi i x \cdot} f(\cdot), \;\;\; x \in \Ren.
$$
Then for $f,g \in L^2 (\Ren)$ the following relations hold:
$$
M_y T_x  = e^{2\pi i x \cdot y } T_x M_y, \;\;
 (T_x f)\hat{} = M_{-x} \hat f, \;\;
  (M_x f)\hat{} = T_{x} \hat f, \;\;\;
  x,y \in \Ren.
$$
These operators are extended to other spaces of functions and distributions in a natural way.

Throughout the paper, $A\lesssim B$
denotes $A\leq c B$ for a suitable constant $c>0$, whereas $A
\asymp B$ means that $c\inv A \leq B \leq c A$ for some $c\geq 1$. The
symbol $B_1 \hookrightarrow B_2$ denotes the continuous and dense embedding of
the topological vector space $B_1$ into $B_2$.

\section{Preliminaries} \label{sec01}

In this section we collect the main tools and auxiliary results which will be used in the sequel.
More precisely, we introduce the test function spaces related to the sequences of the form
$M_p^{\tau,\s}=p^{\tau p^{\s}}$, $p\in \Z$, for a given $\tau>0$ and $\s>1$. Notice that, when $\tau >1$ and $\s = 1$
$M_p^{\tau,1}=p^{\tau p}$, $p\in \Z$  is (equivalent to) the Gevrey sequence. Then we discuss associated functions to such sequences,
which are the main tool of our analysis. To describe precise asymptotic behavior of those associated functions at infinity appears to be
a nontrivial problem, which
can be resolved by the use of Lambert's $W$ functions. Finally, we recall the definition and some elementary properties of the STFT and
modulation spaces defined by mixed weighted Lebesgue norm conditions on the STFT.

\subsection{Extended Gevrey regularity}\label{secTest}

In this section we introduce extended Gevrey classes and discuss their basic properties.
We employ Komatsu's approach \cite{Komatsuultra1} to spaces of
ultradifferentiable functions, and consider defining sequences of the form
$M_p^{\t,\s}=p^{\t p^{\s}}$, $p\in \N $, depending on parameters $\t>0$ and  $\s>1$, \cite{PTT-02}.

Essential properties of the defining sequences are listed in the following lemma. We refer to \cite{PTT-01} for the proof.
In the general theory of ultradistributions (see \cite{Komatsuultra1}
different properties of defining sequences give rise to particular structural properties of the
corresponding spaces of
ultradifferentiable functions, see \cite{RS} for a detalied survey.

\par

\begin{lema}
\label{osobineM_p_s}
Let $\tau>0$, $\s>1$ and $M_p^{\tau,\s}=p^{\tau p^{\s}}$, $p\in \Z$, $M_0^{\tau,\s}=1$.
Then there exists an  increasing sequence of positive numbers $C_q$, $q\in \N$, and a constant $C>0$ such that:
\vspace{2mm}\\
\vspace{1mm}
$(M.1)$ $(M_p^{\t,\s})^2\leq M_{p-1}^{\t,\s}M_{p+1}^{\t,\s}$, $p\in \Z$\\
\vspace{1mm}
$\overline{(M.2)}$ $M_{p+q}^{\t,\s}\leq C^{p^{\s}+q^{\s}}M_p^{\t 2^{\s-1},\s}M_q^{\t 2^{\s-1},\s}$, $p,q\in \N$,\\
\vspace{1mm}
$\overline{(M.2)'}$ $M_{p+q}^{\t,\s}\leq C_q^{p^{\s}}M_p^{\t,\s}$, $p,q\in \N$,\\
\vspace{1mm}
$(M.3)'$
$ \displaystyle
\suml_{p=1}^{\infty}\frac{M_{p-1}^{\t,\s}}{M_p^{\t,\s}} <\infty.
$ Moreover, $\dss \frac{M_{p-1}^{\t,\s}}{M_p^{\t,\s}}\leq \frac{1}{(2p)^{\tau (p-1)^{\s-1}}}$, $p\in \N$.
\end{lema}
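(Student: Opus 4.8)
The plan is to establish each of the four properties by direct estimation of the ratios and sums built from $M_p^{\tau,\s}=p^{\tau p^\s}$, working with logarithms to convert the products and powers into manageable sums, and exploiting convexity of the exponent $p\mapsto \tau p^\s$ on $[0,\infty)$ for $\s>1$. Throughout, the strategy is that the dominant behavior is governed by the exponent $\tau p^\s$, so I would reduce every inequality to a comparison of exponents after taking logs.

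Let me check what each claim really says. For $(M.1)$, logarithmic convexity, I would take logs and show that $p\mapsto \log M_p^{\tau,\s}=\tau p^\s\log p$ is convex in $p$; since the product of the convex factors $p^\s$ and $\log p$ need not be obviously convex, the cleanest route is to verify the discrete midpoint inequality $2\log M_p \le \log M_{p-1}+\log M_{p+1}$ directly, i.e. $2\,\tau p^\s\log p \le \tau(p-1)^\s\log(p-1)+\tau(p+1)^\s\log(p+1)$, which follows from convexity of the smooth function $t\mapsto t^\s\log t$ on $t\ge 1$ (its second derivative is positive there). For $\overline{(M.2)}$ I would use the elementary bound $(p+q)^\s\le 2^{\s-1}(p^\s+q^\s)$ (convexity of $t\mapsto t^\s$) together with $\log(p+q)\le \log 2+\max(\log p,\log q)\le C+\log p+\log q$; combining these in the exponent gives $\tau(p+q)^\s\log(p+q)\le \tau 2^{\s-1}p^\s\log p+\tau 2^{\s-1}q^\s\log q + (\text{a term bounded by } c(p^\s+q^\s))$, and absorbing the last term into the constant $C^{p^\s+q^\s}$ yields exactly the stated form.

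For $\overline{(M.2)'}$ the point is that $q$ is fixed, so I would bound $(p+q)^\s\log(p+q)\le p^\s\log p + (\text{lower order in } p, \text{ depending on } q)$ and show the difference is at most $C_q^{p^\s}$ for a suitable $q$-dependent constant; here one takes logs and checks that the excess is $O(p^{\s-1}\log p)=o(p^\s)$, so it is absorbed into a constant raised to the power $p^\s$. The growth of $C_q$ in $q$ must be monotone, which I would arrange by defining $C_q$ as a supremum of the relevant ratio over $p$ and noting it is increasing. Finally $(M.3)'$ reduces to showing $\tfrac{M_{p-1}}{M_p}\le (2p)^{-\tau(p-1)^{\s-1}}$, and then summing; the sharper pointwise estimate is the crux.

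\textbf{The main obstacle} is the sharp quotient bound in $(M.3)'$. I would compute
\[
\frac{M_{p-1}^{\tau,\s}}{M_p^{\tau,\s}}=\frac{(p-1)^{\tau(p-1)^\s}}{p^{\tau p^\s}}
=\exp\Big(\tau(p-1)^\s\log(p-1)-\tau p^\s\log p\Big),
\]
and the work is to show the exponent is at most $-\tau(p-1)^{\s-1}\log(2p)$. Writing $p^\s\log p-(p-1)^\s\log(p-1)$ and bounding it below by the mean value theorem applied to $t\mapsto t^\s\log t$ (whose derivative is $t^{\s-1}(\s\log t+1)$) evaluated near $t=p-1$, one gets a lower bound of order $(p-1)^{\s-1}\log p$ with the right constant, which after accounting for the factor $2$ in $\log(2p)$ gives the claim for $p$ large; the small-$p$ cases are then checked or absorbed into constants. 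Once this pointwise bound is in hand, the convergence of $\sum_p (2p)^{-\tau(p-1)^{\s-1}}$ is immediate since for $\s>1$ the exponent $\tau(p-1)^{\s-1}\to\infty$, so the terms decay faster than any geometric sequence. I expect the delicate part to be getting the constant in front of $\log(2p)$ exactly right rather than merely up to a multiplicative factor, since the statement asserts a clean bound with the specific base $2p$.
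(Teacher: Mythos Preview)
The paper does not prove this lemma; it states the properties and defers the argument entirely to \cite{PTT-01} with the sentence ``We refer to \cite{PTT-01} for the proof.'' So there is no in-paper proof to compare against. Your plan is the natural one and is essentially what one finds in the cited reference: reduce each inequality to a comparison of exponents via $\log M_p^{\tau,\sigma}=\tau p^{\sigma}\log p$, use convexity of $t\mapsto t^{\sigma}\log t$ for $(M.1)$, use $(p+q)^{\sigma}\le 2^{\sigma-1}(p^{\sigma}+q^{\sigma})$ together with $\log(p+q)\le \log 2+\log\max(p,q)$ for $\overline{(M.2)}$, and use a fixed-$q$ first-order expansion for $\overline{(M.2)'}$.

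One caution on $(M.3)'$: your mean-value lower bound $f(p)-f(p-1)\ge f'(p-1)=(p-1)^{\sigma-1}(\sigma\log(p-1)+1)$ is not quite strong enough to deliver $(p-1)^{\sigma-1}\log(2p)$ uniformly in $\sigma>1$ for small $p$ (try $p=3$ with $\sigma$ close to $1$: $\sigma\log 2+1<\log 6$). The clean way is to argue directly:
\[
p^{\sigma}\log p-(p-1)^{\sigma}\log(p-1)\ \ge\ \big(p^{\sigma}-(p-1)^{\sigma}\big)\log p\ \ge\ (p-1)^{\sigma-1}\big(p-(p-1)\big)\cdot \sigma\,\frac{\log p}{\,?}\,,
\]
but more simply split $p^{\sigma}\log p=(p-1)^{\sigma-1}\,p\,\log p+\big(p^{\sigma}-(p-1)^{\sigma-1}p\big)\log p$ and compare termwise, or equivalently write
\[
p^{\sigma}\log p-(p-1)^{\sigma}\log(p-1)\ge (p-1)^{\sigma-1}\big(p\log p-(p-1)\log(p-1)\big)
\]
(which follows from $p^{\sigma}\ge (p-1)^{\sigma-1}p$) and then verify the elementary one-variable inequality $p\log p-(p-1)\log(p-1)\ge \log(2p)$ for $p\ge 2$, checking $p=1$ separately. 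This yields the stated bound without any constant to absorb. Apart from this sharpening, your outline is correct.
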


Let $\tau,h>0$, $\s>1$ and let $K\subset \subset \Ren$ be a regular compact set.  By ${\E}_{\t, {\s},h}(K)$
we denote
the Banach space of  functions $\phi \in  C^{\infty}(K)$ such that
\begin{equation} \label{Norma}
\| \phi \|_{{\E}_{\t, {\s},h}(K)}=\sup_{\alpha \in \N^d}\sup_{x\in K}
\frac{|\partial^{\alpha} \phi (x)|}{h^{|\alpha|^{\s}}  M_{|\alpha|} ^{\t,\s} }<\infty.\,
\end{equation}

The set of functions $ \phi \in {\E}_{\t,\s,h}(K)$ whose support is contained in $K$ is denoted by  ${\D}^K_{\t, \s,h}$.

Let $U$ be an open set $\Ren$ and $ K \subset \subset U$. We define families of spaces
by introducing the following projective and inductive limit topologies:
\begin{equation*}
\label{NewClassesInd} {\E}_{\{\t,
\s\}}(U)=\varprojlim_{K\subset\subset U}\varinjlim_{h\to
\infty}{\E}_{\t, {\s},h}(K),
\end{equation*}
\begin{equation*}
\label{NewClassesProj} {\E}_{(\t,
\s)}(U)=\varprojlim_{K\subset\subset U}\varprojlim_{h\to 0}{\E}_{\t,
{\s},h}(K),
\end{equation*}
\begin{equation*}
\label{NewClassesInd2} {\D}_{\{\t,
\s\}}(U)=\varinjlim_{K\subset\subset U} {\D}^K_{\{\t, \s\}}
=\varinjlim_{K\subset\subset U} (\varinjlim_{h\to\infty}{\D}^K_{\t,
\s,h})\,,
\end{equation*}
\begin{equation*}
\label{NewClassesProj2} {\D}_{(\t,
\s)}(U)=\varinjlim_{K\subset\subset U} {\D}^K_{(\t, \s)}
=\varinjlim_{K\subset\subset U} (\varprojlim_{h\to 0}{\D}^K_{\t,
\s,h}).
\end{equation*}
We will use abbreviated notation $ \t,\s $ for
$\{\t,\s\}$ (the Roumieu case) or $(\t,\s)$  (the Beurling case) .
The spaces ${\E}_{\t, \s}(U)$, ${\D}^K_{\t, \s}$ and ${\D}_{\t, \s}(U)$
are nuclear, cf. \cite{PTT-01}.
We refer to \cite{PTT-01, PTT-02, PTT-03, TT0, TT, FT} for other properties of those spaces.

\par

\begin{rem} \label{Gevrey-ext-Gevrey}
If $ \t > 1 $ and $\s = 1$,
then $ {\E}_{\{\t, 1\}}(U)={\E}_{\{\t\}}(U)$  is the Gevrey class,
and $\D_{\{\t,1\}}(U)=\D_{\{\t\}}(U)$
is its subspace of compactly supported functions in $\E_{\{\t\}}(U)$.

In particular,
\begin{equation*}
\label{GevreyNewclass}
\varinjlim_{t\to\infty} \E_{\{t\}}(U)\hookrightarrow {\E}_{\tau, \s}(U)
\hookrightarrow  C^{\infty}(U), \;\;\;
\tau>0, \; \s>1,
\end{equation*}
so that  the regularity in ${\E}_{\tau, \s}(U)$ can be thought of as an extended Gevrey regularity.

If $0<\t\leq 1$, then $ {\E}_{\t, 1}(U)$ consists of quasianalytic functions. In particular, $\dss
\D_{\t,1}(U)=\{0\}$ when $0<\t\leq 1$, and ${\E}_{\{1, 1\}}(U)= {\E}_{\{1\}}(U)$ is the space of analytic functions on $U$.
\end{rem}

\par

The non-quasianalyticity condition $(M.3)'$ provides the existence of
partitions of unity in  $\E_{\{\t,\s\}}(U)$, i.e. for any given $\t>0$ and $\s>1$,
there exists a compactly supported function $\phi\in {\E_{\{\t,\s\}}}(U)$ such that $0\leq\phi\leq 1$ and $\int_{\Ren}\phi\,dx=1$, see
\cite{PTT-01} for a construction of a compactly supported  $\phi \in {\D}_{\{\t, \s\}}(U) \setminus {\D}_{\{t\}}(U)$,  $t>1$.

\par

Note that the additional exponent $\s$, which appears in the power of term $h$ in \eqref{Norma},
makes the definition of $\E_{\t,\s}(U)$ different from the definition of  Carleman classes, cf. \cite{HermanderKnjiga}.
This difference appears to be essential in many calculations, and in particular when dealing with the
operators of ``infinite order``, cf. \cite{PTT-02}.

\subsection{The associated function to the sequence $M^{\t,\s}_p=p^{\t p ^{\s}}$}\label{secAsocirana}
In this subsection we recall the definition and asymptotic proeprties of
\emph{extended associated function} to the sequence $M^{\t,\s}_p=p^{\t p^{\s}}$, $p\in\N$, $\t>0$, $\s>1$, cf. \cite{PTT-04}.
We also recall the Paley-Wiener theorem related to the extended Gevrey regularity.

\begin{de} \label{De:asocirana}
Let $\tau>0$, $\s>1$ and $M_p^{\tau,\s}=p^{\tau p^{\s}}$, $p\in \Z$, $M_0^{\tau,\s}=1$.
The extended associated function related to the sequence $M_p^{\t,\s}$, is given by
$$
\dss T_{\t,\s,h}(k)=\sup_{p\in \N}\ln_+\frac{h^{p^{\s}}k^{p}}{M_p^{\t,\s}}, \;\;\; h,k>0,
$$
where $\dss \ln_+ A=\max\{0, \ln A\}$, for $A>0$.
\end{de}

Obviously $T_{\t,\s,h}(k)$,  $\t,h>0$, $\s>1$,  is positive for sufficiently large  $k>0$.

In fact, for any  sequence of positive numbers  $M_p$, $p\in \N$, such that $M^{1/p}_p$ is bounded from below and $M_0 = 1$, its \emph{associated function}
is defined to be
$$T(k)=\sup_{p\in\N}\ln \frac{k^{p}}{M_p},\quad k>0.$$
Therefore, for $\t>0$ and $\s=1$, $T_{\t,1,h}(k):=T_{\t}(hk)$ is the associated function to the Gevrey sequence $p^{\t p}$, $p\in\N$ (we may assume $h=1$ without loosing generality).

It is well known (cf. \cite{GelfandShilov, Rodino}) that
\be
\label{NejednakostGevrey}
 A k^{1/\t}-B \leq T_{\t}(k)\leq A k^{1/\t}, \quad k>0,
\ee for suitable $A,B>0$. In particular, the growth of $\dss e^{T_{\t}(k)}$ for $\t>1$ is subexponential.

Moreover, for any $t,\t>0$ and $\s>1$, by \cite[Lemma 2.3]{PTT-04} it follows that there is a constant $C>0$ such that
$$T_{\t,\s,1}(k)< C k^{1/t},\quad k>0.$$
Therefore the function $e^{T_{\t,\s,h}(k)}$ has a less rapid  growth at infinity  than any subexponential function.

The precise asymptotic behavior of $T_{\t,\s,h}(k)$ at infinity is a challenging problem. We use an auxiliary special function to resolve that problem.

\par

The Lambert $W$ function is defined as the inverse function of $z e^{z}$, $z\in {\bf C}$, wherefrom the following property holds:
$$
\dss x=W(x)e^{W(x)}, \quad x\geq 0.
$$
We denote its principal (real) branch by $W(x)$, $x\geq 0$ (see \cite{LambF}).
It is a continuous,  increasing and concave function on $[0,\infty)$, $W(0)=0$, $W(e)=1$, and $W(x)>0$, $x>0$.

It can be shown that $W$ can be represented in the form of the absolutely convergent series
$$
W(x)=\ln x-\ln (\ln x)+\sum_{k=0}^{\infty}\sum_{m=1}^{\infty}c_{km}\frac{(\ln(\ln x))^m}{(\ln x)^{k+m}},\quad x\geq x_0>e,
$$
with suitable constants $c_{km}$ and  $x_0 $, wherefrom  the following  estimates hold:

\be
\label{sharpestimateLambert}
\ln x -\ln(\ln x)\leq W(x)\leq \ln x-\frac{1}{2}\ln (\ln x), \quad x\geq e.
\ee
The equality in \eqref{sharpestimateLambert} holds if and only if $x=e$.
We refer to \cite{HoHa, LambF} for more details about the Lambert $W$ function.

\par

\begin{te} (\cite{PTT-04})
\label{propozicija}
Let there be given $\t, h>0$, $\s>1$ and let  $C_{\t,\s,h} = h^{-\frac{\s-1}{\t}}e^{\frac{\s-1}{\s}}\frac{\s-1}{\t \s}$. Then
\begin{multline}
\label{nejednakostzaTeoremu1}
\exp\Big\{ ( 2^{\s-1 }\t)^{-\frac{1}{\s-1}}{\Big(\frac{\s-1}{ \s}\Big)^{\frac{\s}{\s-1}} \, {W^{-\frac{1}{\s-1}}(C_{\t,\s,h}\ln k)}\,{\ln}^{\frac{\s}{\s-1}}k }\Big\}
\lesssim e^{T_{\t,\s,h}(k)}\\
\lesssim
exp\Big\{{\Big(\frac{\s-1}{\t \s}\Big)^{\frac{1}{\s-1}}\, {W^{-\frac{1}{\s-1}}(C_{\t,\s,h} \ln k)}\,{\ln}^{\frac{\s}{\s-1}}k }\Big\}, \quad k>e.
\end{multline}
If, moreover $1<\s<2$, then we have the precise asymptotic formula
$$
\label{nejednakostzaTeoremu2}
e^{T_{\t,\s,h}(k)} \asymp
\exp\Big\{{\Big(\frac{\s-1}{\t \s}\Big)^{\frac{1}{\s-1}}\, {W^{-\frac{1}{\s-1}}(C_{\t,\s,h}\ln k)}\,{\ln}^{\frac{\s}{\s-1}}k }\Big\}, \quad k>e.
$$
The hidden constants in \eqref{nejednakostzaTeoremu1} and \eqref{nejednakostzaTeoremu2} depend on $\t,\s$ and $h$.
\end{te}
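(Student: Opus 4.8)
The plan is to analyze the supremum defining $T_{\t,\s,h}(k)=\sup_{p\in\N}\bigl(p^{\s}\ln h+p\ln k-\t p^{\s}\ln p\bigr)_+$ by replacing the discrete supremum over $p\in\N$ with a maximization over a continuous variable $x>0$, which will introduce only controlled multiplicative errors (via property $\overline{(M.2)}$ from Lemma \ref{osobineM_p_s}, since shifting the optimal integer by one changes $M_p^{\t,\s}$ by a factor that is subexponential in the relevant scale). First I would set $F(x)=x\ln k-\t x^{\s}\ln x+x^{\s}\ln h$ and differentiate: the first-order condition $F'(x)=0$ reads $\ln k=\t\s x^{\s-1}\ln x+\t x^{\s-1}-\s x^{\s-1}\ln h$. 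The leading behavior is governed by the term $\t\s x^{\s-1}\ln x$, so the critical point $x_0=x_0(k)$ satisfies, to leading order, $x_0^{\s-1}\ln x_0\asymp (\t\s)^{-1}\ln k$.

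The heart of the argument is solving this transcendental relation for $x_0$ in terms of $\ln k$, and this is exactly where the Lambert $W$ function enters and where I expect the main obstacle to lie. Writing $y=x_0^{\s-1}$ (equivalently $x_0=y^{1/(\s-1)}$), one finds $\ln x_0=\tfrac{1}{\s-1}\ln y$, so the relation becomes $y\ln y\asymp \tfrac{\s-1}{\t\s}\ln k$, whose solution is of Lambert type: $\ln y = W\bigl(\tfrac{\s-1}{\t\s}\ln k\bigr)$ up to the lower-order correction captured by the constant $C_{\t,\s,h}$. I would then substitute $x_0$ back into $F(x_0)$; the dominant contribution is $F(x_0)\approx x_0\ln k - \t x_0^{\s}\ln x_0$, and using the critical-point identity to eliminate $\ln k$ yields $F(x_0)\asymp x_0^{\s}\ln x_0$ up to constants, which in the $y$-variable becomes $\tfrac{1}{\s-1}\,y^{\s/(\s-1)}\ln y$. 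Expressing $y=e^{W(C_{\t,\s,h}\ln k)}$ and $\ln y=W(C_{\t,\s,h}\ln k)$, and invoking the defining identity $W(x)e^{W(x)}=x$ to rewrite $e^{W}=\frac{C_{\t,\s,h}\ln k}{W(C_{\t,\s,h}\ln k)}$, one arrives after simplification at the exponent $\bigl(\tfrac{\s-1}{\t\s}\bigr)^{1/(\s-1)}W^{-1/(\s-1)}(C_{\t,\s,h}\ln k)\,\ln^{\s/(\s-1)}k$, matching the upper bound in \eqref{nejednakostzaTeoremu1}.

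To make the estimates rigorous rather than merely asymptotic, I would bracket the true discrete maximizer between $\lfloor x_0\rfloor$ and $\lceil x_0\rceil$ and control the difference $F(x_0)-T_{\t,\s,h}(k)$ using the concavity of $F$ near $x_0$ together with the sharp two-sided bounds \eqref{sharpestimateLambert} on $W$; the lower bound in \eqref{nejednakostzaTeoremu1}, with its factor $2^{\s-1}\t$ in place of $\t$, reflects precisely the loss incurred when property $\overline{(M.2)}$ is used to compare $M_p^{\t,\s}$ with $M_p^{\t 2^{\s-1},\s}$ in passing between the continuous and discrete regimes. Finally, for the sharp regime $1<\s<2$, the correction terms in the expansion of $W$ and the rounding errors become genuinely negligible at the exponential scale, so the upper and lower exponents coincide up to multiplicative constants and the two-sided relation $e^{T_{\t,\s,h}(k)}\asymp \exp\{\cdots\}$ follows. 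I expect the delicate bookkeeping of the constant $C_{\t,\s,h}$ through the change of variables, and verifying that the subleading terms of $F(x_0)$ do not disturb the leading $W$-asymptotics, to be the most technically demanding parts.
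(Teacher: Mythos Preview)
The paper does not contain a proof of this theorem: it is stated with the citation ``(\cite{PTT-04})'' and no argument is given, since it is quoted from \cite{PTT-04} as background for the Paley--Wiener type results. There is therefore no in-paper proof to compare your proposal against.

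That said, your strategy is the natural one and matches what one would expect in \cite{PTT-04}: pass to the continuous maximization of $F(x)=x\ln k+x^{\s}\ln h-\t x^{\s}\ln x$, solve the critical-point equation via the substitution $u=x^{\s-1}$, and reduce to the canonical Lambert form $v\ln v=C_{\t,\s,h}\ln k$. In fact your computation can be pushed through exactly: with $a=\tfrac{\s-1}{\s}-\tfrac{(\s-1)\ln h}{\t}$ one gets $e^{a}\tfrac{\s-1}{\t\s}=C_{\t,\s,h}$ on the nose, which explains the precise constant in the statement. One caution: your attribution of the factor $2^{\s-1}\t$ in the lower bound to property $\overline{(M.2)}$ is not quite right. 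That property compares $M^{\t,\s}_{p+q}$ with $M^{\t 2^{\s-1},\s}_p M^{\t 2^{\s-1},\s}_q$, but the passage from the continuous maximizer $x_0$ to $\lfloor x_0\rfloor$ involves only a shift by one and is controlled by $\overline{(M.2)'}$, not $\overline{(M.2)}$; the loss to $2^{\s-1}\t$ more plausibly arises when one bounds $F(x_0)$ from below after substituting the critical-point identity and discarding a subleading positive term (the resulting constant in front differs from the upper-bound constant, and the discrepancy is exactly the stated one). Apart from this bookkeeping point, the outline is sound.
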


\begin{rem}
\label{RemarkAsimtotska}
Note that, in the view of \eqref{sharpestimateLambert} we have
\begin{multline}
\label{asimptotskaocena}
 {W^{-\frac{1}{\s-1}}(C \ln k)}\,{\ln}^{\frac{\s}{\s-1}}k
 \asymp
 \frac{\ln^{\frac{\s}{\s-1}} k}{\ln^{\frac{1}{\s-1} }(C \ln k)} \asymp \frac{\ln^{\frac{\s}{\s-1}} k}{\ln^{\frac{1}{\s-1} }(\ln k)},\quad k\to \infty,
\end{multline}
for any given $\s > 1$, and the last behavior follows from  $\dss \ln(C\ln k) \asymp \ln(\ln k)$, $k\to \infty$, for any given  $C>0$.

\par

Since $\lim_{k\to\infty} ( \ln k)^{1/(\s-1)} (\ln (C \ln k) )^{-1/(\s-1)} =\infty,$
for every $C>0$,
\eqref{asimptotskaocena} implies that for every $M>0$ there exists $B>0$ (depending on $h$ and $M$) such that
$$
{W^{-\frac{1}{\s-1}}(C \ln k))}\,{\ln}^{\frac{\s}{\s-1}}k> M\ln k,\quad k> B.
$$
\end{rem}

Next we recall the Paley-Wiener theorem for $\D^K_{\t,\s}$  when $1<\s<2$. For the proof we refer to \cite{PTT-05}, and a more general case  when $\s \geq 2$ is proved in \cite{PTT-04}.

\begin{theorem}
\label{PosledicaPaley}
Let $\t>0$, $1<\s<2$, $U$ be open set in ${\mathbf R}^d$ and $K\subset\subset U$. If $\varphi\in \D_{\{\t,\s\}}^{K}$ (resp. $\varphi\in \D_{(\t,\s)}^{K}$) then its Fourier-Laplace transform is an entire function and there exists constants $A,B>0$ (resp. for every $B>0$ there exists $A>0$) such that

\begin{multline}
\label{ocenaTeorema0}
|\widehat\varphi(\eta)|\leq  A \exp\Big\{-\Big(\frac{\s-1}{\t \s}\Big)^{\frac{1}{\s-1}}{W^{-\frac{1}{\s-1}}\Big(B \ln (e+|\eta|)\Big)}{{\ln}}^{\frac{\s}{\s-1}}(e+|\eta|)+ H_K(\eta) \Big\}\\ \quad h>0,\, \eta \in {\mathbf C}^d,
\end{multline} where $\dss H_K(\eta)=\sup_{y\in K} {\rm Im}(y\cdot\eta)$.

Conversely, if there exists $A,B>0$ (resp. for every $B>0$ there exists $A>0$) such that an entire function $\widehat{\varphi}(\eta)$ satisfies \eqref{ocenaTeorema0} then $\widehat{\varphi}(\eta)$ is the Fourier-Laplace transform of  $\varphi\in \D_{\{2^{\s-1}\t,\s\}}^{K}$ (resp.  $\D_{(2^{\s-1}\t,\s)}^{K}$).
 \end{theorem}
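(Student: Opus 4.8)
The plan is to prove the Paley--Wiener theorem for $\D_{\{\t,\s\}}^K$ by combining the classical integration-by-parts argument with the sharp asymptotic estimate for the extended associated function from Theorem~\ref{propozicija}. The point is that this is structurally the same as the ordinary Gevrey Paley--Wiener theorem, except that the elementary exponent $k^{1/\t}$ is replaced by the more delicate quantity governed by the Lambert $W$ function.

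For the direct part, I would start from $\varphi\in\D_{\{\t,\s\}}^K$, so that $\supp\varphi\subseteq K$ and there exist $h,C>0$ with $|\partial^\alpha\varphi(x)|\le C\,h^{|\alpha|^\s}M_{|\alpha|}^{\t,\s}$ for all $\alpha$. The Fourier--Laplace transform $\widehat\varphi(\eta)=\int\varphi(y)e^{-2\pi i y\cdot\eta}\,dy$ is entire by compact support, and integrating by parts $\alpha$ times gives $(2\pi\eta)^\alpha\widehat\varphi(\eta)=\int(D^\alpha\varphi)(y)\,e^{-2\pi i y\cdot\eta}\,dy$, whence
\begin{equation*}
|\eta|^{|\alpha|}\,|\widehat\varphi(\eta)|\lesssim h^{|\alpha|^\s}M_{|\alpha|}^{\t,\s}\,e^{H_K(\eta)}
\end{equation*}
after summing over the $d$ coordinate directions and absorbing constants. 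The crucial step is then to optimize over $\alpha$: dividing through and taking the supremum over $p=|\alpha|\in\N$ produces exactly $\exp(-T_{\t,\s,h'}(|\eta|))$ up to an adjusted constant $h'$, by the very definition of $T_{\t,\s,h}$ in Definition~\ref{De:asocirana}. Substituting the upper bound from \eqref{nejednakostzaTeoremu1} of Theorem~\ref{propozicija} for $e^{T_{\t,\s,h'}(k)}$ (equivalently, the lower bound for $-T$) converts this into the Lambert-$W$ expression appearing in \eqref{ocenaTeorema0}, with the constant $B$ coming from $C_{\t,\s,h'}$.

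For the converse, I would invert the Fourier transform. Given an entire $\widehat\varphi$ satisfying \eqref{ocenaTeorema0}, one first shows $\varphi(x)=\int\widehat\varphi(\xi)e^{2\pi i x\cdot\xi}\,d\xi$ is supported in $K$ by shifting the contour of integration into the complex domain and using the $H_K(\eta)$ term to control the shift, which is the standard Paley--Wiener support argument. Then, to estimate $\partial^\alpha\varphi$, I would differentiate under the integral sign, bound $|\partial^\alpha\varphi(x)|\le\int|\xi|^{|\alpha|}|\widehat\varphi(\xi)|\,d\xi$, and insert the decay of $|\widehat\varphi|$. Balancing $|\xi|^{|\alpha|}$ against $\exp(-T_{\t,\s,h}(|\xi|))$ and using the inequality $\ln|\xi|^p-T_{\t,\s,h}(|\xi|)\le \ln M_p^{\t,\s}-p^\s\ln h$ (again from the definition of the associated function as a Legendre-type transform) recovers the derivative bound defining $\D_{\{\t,\s\}}^K$. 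The loss in the parameter, from $\t$ to $2^{\s-1}\t$, is forced by property $\overline{(M.2)}$ of Lemma~\ref{osobineM_p_s}, which is precisely where the convolution/subadditivity estimate on $M_p^{\t,\s}$ enters and cannot preserve $\t$ exactly.

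The main obstacle will be the converse direction, specifically translating the Lambert-$W$ decay back into a clean bound on $M_p^{\t,\s}$. In the ordinary Gevrey case the two-sided estimate \eqref{NejednakostGevrey} makes the Legendre inversion routine, but here the asymptotics of $T_{\t,\s,h}$ are only pinned down up to the constants in \eqref{nejednakostzaTeoremu1}, and these constants differ between the upper and lower bounds unless $1<\s<2$. This gap is exactly why the statement restricts to $1<\s<2$ (where \eqref{nejednakostzaTeoremu2} gives a genuine $\asymp$) and why the parameter degrades to $2^{\s-1}\t$; managing these constants carefully, rather than any new idea, is the technical heart of the argument.
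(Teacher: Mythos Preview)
The paper does not contain its own proof of this theorem: it is explicitly recalled from \cite{PTT-05} (and, for $\s\ge 2$, from \cite{PTT-04}), so there is no in-text argument to compare against line by line. Your outline is exactly the natural strategy one would expect those references to follow---integration by parts plus optimization over the order of differentiation to reach $|\widehat\varphi(\eta)|\lesssim e^{-T_{\t,\s,h'}(|\eta|)}e^{H_K(\eta)}$, then Theorem~\ref{propozicija} to translate $T_{\t,\s,h'}$ into the Lambert-$W$ form; and, for the converse, Fourier inversion with a contour shift for the support together with the Legendre-type bound $|\xi|^{p}e^{-T_{\t,\s,h}(|\xi|)}\le h^{-p^\s}M_p^{\t,\s}$ (plus an extra $d+1$ powers of $\langle\xi\rangle$ for integrability, handled via $\overline{(M.2)'}$) to recover the derivative estimates.

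One small correction in the direct part: once you have $|\widehat\varphi|\lesssim e^{-T_{\t,\s,h'}}e^{H_K}$, passing to \eqref{ocenaTeorema0} requires an \emph{upper} bound on $e^{-T}$, i.e.\ a \emph{lower} bound on $e^{T}$, not the upper bound on $e^{T}$ as you wrote; your parenthetical ``equivalently, the lower bound for $-T$'' is the wrong direction. For $1<\s<2$ the two-sided relation \eqref{nejednakostzaTeoremu2} supplies both inequalities, so the slip is harmless, but the labeling is reversed. Also, your attribution of the $\t\mapsto 2^{\s-1}\t$ loss to $\overline{(M.2)}$ is plausible in spirit but not quite where it enters: the loss reflects the gap between the constants in the lower and upper estimates of \eqref{nejednakostzaTeoremu1} (note that $2^{\s-1}\t$ already appears explicitly in the lower bound there), rather than a direct application of $\overline{(M.2)}$ inside the Fourier-inversion step---though of course that gap in Theorem~\ref{propozicija} is itself a manifestation of $\overline{(M.2)}$.
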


The following corollary is an immediate consequence of Theorem \ref{PosledicaPaley} and \eqref{asimptotskaocena}.

\begin{cor}
\label{PoslednjaTeorema}
Let $1<\s<2$, $U$ be open set in ${\mathbf R}^d$ and $K\subset\subset U$. Then the entire function $\widehat{\varphi}(\eta)$, $\eta\in {\mathbf C}^d$, is the Fourier-Laplace transform of  $$\varphi\in \varinjlim_{\t\to\infty}\D^K_{\t,\s}\, (\rm{resp}.\,\, \varphi\in \varprojlim_{\t\to 0}\D^K_{\t,\s})$$ if and only if there exist constant $A,B>0$ (resp. for every $B>0$ there exists $A>0$) such that
$$
|{\widehat\varphi}(\eta)|\leq A \exp\left \{-B\frac{\ln^{\frac{\s}{\s-1}}(e+|\eta|)}{\ln^{\frac{1}{\s-1}}(\ln (e+|\eta|))}+H_K(\eta)\right \}, \eta \in {\mathbf C}^d,
$$
where $\dss H_K(\eta)=\sup_{x\in K} {\rm Im}(x\cdot\eta)$.
\end{cor}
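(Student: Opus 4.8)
The plan is to read the claim off directly from the Paley--Wiener characterization in Theorem \ref{PosledicaPaley}, replacing the Lambert-$W$ exponent appearing there with the more transparent quotient in \eqref{asimptotskaocena}. First I would record that, since the defining sequences $M_p^{\t,\s}=p^{\t p^{\s}}$ increase with $\t$, the spaces $\D^K_{\t,\s}$ form an increasing scale in $\t$. Consequently $\varphi\in\varinjlim_{\t\to\infty}\D^K_{\t,\s}$ exactly when $\varphi\in\D^K_{\t,\s}$ for some finite $\t>0$, while dually $\varphi\in\varprojlim_{\t\to0}\D^K_{\t,\s}$ exactly when $\varphi\in\D^K_{\t,\s}$ for every $\t>0$. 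This reduces the whole statement to comparing, up to constants, the two growth rates that occur in the two exponential bounds.

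For necessity I would apply Theorem \ref{PosledicaPaley} to obtain \eqref{ocenaTeorema0}, whose exponent contains the factor $\big(\tfrac{\s-1}{\t\s}\big)^{1/(\s-1)}W^{-1/(\s-1)}(B\ln(e+|\eta|))\,\ln^{\s/(\s-1)}(e+|\eta|)$. By \eqref{asimptotskaocena} this factor satisfies $W^{-1/(\s-1)}(C\ln k)\,\ln^{\s/(\s-1)}k\asymp \ln^{\s/(\s-1)}k\,\ln^{-1/(\s-1)}(\ln k)$ as $k\to\infty$, so after lowering the leading constant to some $B'>0$ the Lambert-$W$ term is bounded below by $B'\,\ln^{\s/(\s-1)}(e+|\eta|)\,\ln^{-1/(\s-1)}(\ln(e+|\eta|))$ for large $|\eta|$; the bounded range of $\eta$ is harmless and only affects the prefactor $A$. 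This gives precisely the asserted estimate.

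For sufficiency I would run the same comparison in reverse: starting from the hypothesized bound, \eqref{asimptotskaocena} lets me replace $\ln^{\s/(\s-1)}(e+|\eta|)\,\ln^{-1/(\s-1)}(\ln(e+|\eta|))$ by a constant multiple of the Lambert-$W$ expression, so that \eqref{ocenaTeorema0} holds with an admissible pair of constants for a suitable $\t$. The converse half of Theorem \ref{PosledicaPaley} then yields $\varphi\in\D^K_{2^{\s-1}\t,\s}$, and since $2^{\s-1}\t$ is again finite, $\varphi\in\varinjlim_{\t\to\infty}\D^K_{\t,\s}$. The Beurling case follows by carrying the quantifier ``for every $B$ there exists $A$'' through both directions of the comparison, together with the monotone-intersection description of $\varprojlim_{\t\to0}\D^K_{\t,\s}$.

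The hard part is not conceptual but the bookkeeping of the constant $B$ across the asymptotic equivalence \eqref{asimptotskaocena}. Because that equivalence holds only as $|\eta|\to\infty$, one must split off a bounded region of $\eta$ and absorb it into $A$, and one must check that the multiplicative constants hidden in \eqref{asimptotskaocena}---which depend on $\s$ and, through $C_{\t,\s,h}$, on $\t$ and $h$---leave the resulting exponential constant strictly positive in both directions. Once this is verified in the Roumieu case, the Beurling case is identical up to the interchange of quantifiers.
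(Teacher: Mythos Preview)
Your proposal is correct and follows exactly the route the paper indicates: the paper states only that the corollary is ``an immediate consequence of Theorem \ref{PosledicaPaley} and \eqref{asimptotskaocena}'', and your argument is precisely that deduction, supplemented with the (accurate) identification of the inductive and projective limits with ``some $\t$'' and ``every $\t$'' and with the necessary bookkeeping of constants across the asymptotic equivalence.
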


\subsection{Modulation Spaces} \label{Modulation Spaces}

The modulation spaces were initially (and systematically) introduced in \cite{F1}. See also\cite[Ch.~11-13]{Gro} and
the original literature quoted there for various properties and applications of the so called {\em standard} modulation spaces.
It is usually sufficient to observe weighted modulation spaces
with weights which may grow  at most polynomially at infinity.
However, for the study  of  ultra-distributions a more general approach which includes  weights of exponential or even superexponential growth is needed,
cf. \cite{CPRT1, Toft-2017}. We refer to \cite{FG-Atom1, FG-Atom2}
for related but even more general constructions, based on the general theory of coorbit spaces.

\par

For our purposes it is sufficient to consider weights of exponential growth. Therefore we begin with
the Gelfand-Shilov space of analytic functions   $  {\mathcal S} ^{(1)} (\mathbb{R}^d ) $ given by
$$
f \in  {\mathcal S}^{( 1)} (\mathbb{R}^d) \Longleftrightarrow
\sup_{x\in \mathbb{R}^d } |f(x)  e^{h\cdot |x|}| < \infty \;
\; \text{and} \;
\sup_{\omega \in \mathbb{R}^d } | \hat f (\omega)  e^{h\cdot |\omega|} | < \infty,
$$
for every $ h > 0.$ Any $ f \in  {\mathcal S}^{( 1)} (\mathbb{R}^d) $ can be extended to a
holomorphic function $f(x+iy)$ in the strip
$ \{ x+iy \in \mC ^d \; : \; |y| < T \} $ some $ T>0$, \cite{GelfandShilov, NR}.
The dual space of  $  {\mathcal S} ^{(1)} (\mathbb{R}^d ) $ will be denoted by
$ {\mathcal S} ^{(1)'}   (\mathbb{R}^d ). $
In fact,  $  {\mathcal S} ^{(1)} (\mathbb{R}^d ) $ is isomorphic to the Sato test function space for the space of Fourier hyperfunctions $ {\mathcal S} ^{(1)'}   (\mathbb{R}^d ), $ see \cite{CCK1994}.


Let there be given $f, g \in L^2 (\mathbb{R}^d).$ The short-time Fourier transform (STFT) of
$ f  $ with respect to the window $g$  is given by
\be \label{GT}
V_g f (x,\omega) = \int e^{-2\pi i t \omega} f(t)  \overline{ g(t-x)}dt, \;\;\; x,\omega \in \mathbb{R}^d.
\ee
It restricts to a mapping from $  {\mathcal S} ^{(1)} (\mathbb{R}^d ) \times   {\mathcal S} ^{(1)} (\mathbb{R}^d ) $ to
$  {\mathcal S} ^{(1)} (\mathbb{R}^{2d} ) $, which is proved in the next Lemma.

\begin{lemma} \label{Lm:STFTonS1}
Let  $f,g \in \cS^{(1)} (\Ren )$, and let the short-time Fourier transform (STFT) of
$ f $ with respect to $g$ be given by \eqref{GT}. Then
$V_g f (x,\omega) \in {\mathcal S} ^{(1)} (\mathbb{R}^{2d} ) $, that is
$ |V_gf(x,\o)| < C e^{-s \|\phas\| }$, $x,\omega \in \mathbb{R}^d$, for every $s > 0.$
\end{lemma}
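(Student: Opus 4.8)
The plan is to show that the STFT of two functions from the Gelfand-Shilov space $\mathcal S^{(1)}$ again lies in $\mathcal S^{(1)}$, which by the characterization given in the excerpt means establishing superexponential decay of $V_gf(x,\omega)$ in \emph{both} variable groups $x$ and $\omega$. Since $\mathcal S^{(1)}(\mathbb R^{2d})$ consists of functions $F(x,\omega)$ with $\sup |F(x,\omega)e^{h|(x,\omega)|}|<\infty$ for every $h>0$ (together with the same condition on its Fourier transform), and since $\|(x,\omega)\| \asymp |x|+|\omega|$, it suffices to prove the pointwise bound $|V_gf(x,\omega)|\leq C e^{-s(|x|+|\omega|)}$ for every $s>0$, with $C=C_s$. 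The fact that this single decay estimate is enough follows because $\mathcal S^{(1)}$ is invariant under the Fourier transform, and the symplectic Fourier transform of $V_gf$ is again (up to a chirp and swapping/reflection of variables) an STFT of the same type; so once the decay in $(x,\omega)$ is obtained, the corresponding decay of the Fourier transform is automatic.

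First I would control the decay in $x$. Writing $V_gf(x,\omega)=\int e^{-2\pi i t\omega}f(t)\overline{g(t-x)}\,dt$, I split the domain into the region $|t|\leq |x|/2$ and $|t|\geq |x|/2$. On the first region $|t-x|\geq |x|/2$, so the factor $\overline{g(t-x)}$ is bounded by $Ce^{-h|x|/2}$ using $g\in\mathcal S^{(1)}$; on the second region $f(t)$ is bounded by $Ce^{-h|x|/2}$ using $f\in\mathcal S^{(1)}$. In both cases the remaining factor is integrable (indeed $f$ and $g$ decay superexponentially), so I get $|V_gf(x,\omega)|\leq C_h e^{-(h/2)|x|}$ uniformly in $\omega$, for every $h>0$.

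Second I would control the decay in $\omega$. Here the natural device is integration by parts, exploiting that both $f$ and $g$ extend holomorphically and have rapidly decreasing derivatives; equivalently, one uses the identity $V_gf(x,\omega)=e^{-2\pi i x\omega}\,\overline{V_{\hat g}\hat f(\omega,-x)}$ (the fundamental identity of time-frequency analysis), which reduces decay in $\omega$ to decay in the first slot of an STFT of $\hat f,\hat g\in\mathcal S^{(1)}$. Applying the estimate from the previous paragraph to $V_{\hat g}\hat f$ yields $|V_gf(x,\omega)|\leq C_h e^{-(h/2)|\omega|}$ for every $h>0$. Combining the two bounds, for a given $s>0$ I choose $h=4s$ and estimate $e^{-(h/2)|x|}e^{-(h/2)|\omega|}=e^{-2s(|x|+|\omega|)}\leq e^{-s\,\|(x,\omega)\|}$ after absorbing the equivalence constant between $|x|+|\omega|$ and $\|(x,\omega)\|$; taking geometric means of the two separate bounds then gives simultaneous decay in both variables, i.e. $|V_gf(x,\omega)|\leq C e^{-s\|(x,\omega)\|}$.

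The main obstacle is getting genuine decay in both $x$ and $\omega$ simultaneously rather than separately: the two elementary arguments above each give decay in only one variable group while remaining merely \emph{bounded} (uniformly) in the other. The clean way around this is to interpolate the two one-sided bounds by taking their geometric mean, $|V_gf|=|V_gf|^{1/2}|V_gf|^{1/2}\leq C e^{-(h/4)|x|}e^{-(h/4)|\omega|}$, which is legitimate since each bound holds for \emph{every} $h>0$. One should also verify that these estimates indeed place $V_gf$ in $\mathcal S^{(1)}(\mathbb R^{2d})$ in the Fourier-transform sense; this is where invoking the symplectic-Fourier-transform/covariance structure of the STFT, together with the Fourier invariance of $\mathcal S^{(1)}$ recalled in the excerpt, closes the argument without an additional integration-by-parts computation.
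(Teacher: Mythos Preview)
Your argument for the decay estimate $|V_g f(x,\omega)|\le C_s e^{-s\|(x,\omega)\|}$ is correct: the domain-splitting gives uniform decay in $x$, the fundamental identity $V_g f(x,\omega)=e^{-2\pi i x\omega}\,\overline{V_{\hat g}\hat f(\omega,-x)}$ transfers this to decay in $\omega$, and the geometric-mean interpolation is legitimate since each one-sided bound holds for every $h>0$. This is the content actually used downstream in the paper.

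The paper proceeds differently. It writes $V_g f=\mathcal F_2\,\mathcal T(f\otimes \bar g)$, where $\mathcal T F(x,t)=F(t,t-x)$ and $\mathcal F_2$ is the partial Fourier transform in the second block, and then invokes the kernel theorem $\mathcal S^{(1)}(\mathbb R^{2d})\cong \mathcal S^{(1)}(\mathbb R^d)\,\hat\otimes\,\mathcal S^{(1)}(\mathbb R^d)$ together with the invariance of $\mathcal S^{(1)}(\mathbb R^{2d})$ under $\mathcal T$ and $\mathcal F_2$. This structural route yields full membership $V_g f\in\mathcal S^{(1)}(\mathbb R^{2d})$ (both the function and its Fourier transform decay) in one stroke, without any splitting or interpolation. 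Your approach is more elementary and self-contained, avoiding the tensor-product machinery, at the cost of handling the two variable groups separately.

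One small imprecision: your remark that ``the symplectic Fourier transform of $V_g f$ is again \dots\ an STFT of the same type'' is not the right identity for closing the $\mathcal S^{(1)}$ membership on the Fourier side. What is true, and equally easy, is that the \emph{full} $2d$-Fourier transform of $V_g f$ computes explicitly to $(\eta,y)\mapsto e^{2\pi i y\cdot\eta}\,f(-y)\,\overline{\hat g(\eta)}$, whose modulus is $|f(-y)|\,|\hat g(\eta)|$ and hence decays like $e^{-h(|y|+|\eta|)}$ for every $h>0$ directly from $f,g\in\mathcal S^{(1)}$. Replacing your symplectic-Fourier sentence with this computation makes the argument complete.
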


\begin{proof} The proof is standard, see e.g. \cite{Gro} for the proof in the context of  $ {\mathcal S}  (\mathbb{R}^{d} ) $.
We use the arguments based on the structure of $ \cS^{(1)} (\Ren )$ as follows.
Let $ f \otimes g $ be the tensor product $ f \otimes g (x,t)  = f(x)\cdot g(t),$ let
$\mathcal{T} $ denote the asymmetric coordinate transform $ \mathcal{T} F(x,t) = F(t,t-x)$,
and let $ \mathcal{F}_2 $ be the partial Fourier transform
$$
\mathcal{F}_2 F (x,\omega) = \int_{\Ren}  F(x,t) e^{-2\pi i t \omega} dt, \quad x,\omega \in \mathbb{R}^d,
$$
of a function $F$ on $\Renn$. Then
$$
V_g f (x,\o)  = \mathcal{F}_2 \mathcal{T} (f \otimes g )(x,\o), \quad (x,\o)\in \Renn.
$$
Since $ \displaystyle \cS ^{(1)}(\Renn)\cong \cS ^{(1)}(\Ren) \hat{\otimes} \cS ^{(1)}(\Ren) $ (see e.g. \cite{Teof2015} for the kernel theorem in Gelfand-Shilov spaces)
and since $ \displaystyle \cS ^{(1)}(\Renn)$ is invariant under the action of $\mathcal{T} $ and $ \mathcal{F}_2 $, we conclude that
$ |V_g f(x,\o)| < C e^{-s \|\phas\| } $, $ x,\omega \in \mathbb{R}^d,$ for every $s > 0.$
\end{proof}

\emph{Weight Functions.}  In the sequel $v$ will always be a
continuous, positive,  even, submultiplicative   function
(submultiplicative weight), i.e., $v(0)=1$, $v(z) =
v(-z)$, and $ v(z_1+z_2)\leq v(z_1)v(z_2)$, for all $z,
z_1,z_2\in\Renn.$ Moreover, $v$ is assumed to be even in each group of coordinates,  that is, $ v (x, \o) = v (-\o, x)= v(-x,\o), $ for any $\phas\in\Renn$. Submultipliciativity implies that $v(z)$ is \emph{dominated} by an exponential function, i.e.
\begin{equation} \label{weight}
 \exists\, C, k>0 \quad \mbox{such\, that}\quad  v(z) \leq C e^{k \|z\|},\quad z\in \Renn,
\end{equation}
and   $\|z\|$ is  the Euclidean norm of $z\in \Renn$.
For example, every weight of the form
$$
v(z) =   e^{s\|z\|^b} (1+\|z\|)^a \log ^r(e+\|z\|)
$$
 for parameters $a,r,s\geq 0$, $0\leq b \leq 1$ satisfies the
above conditions.

\par

Associated to every submultiplicative weight we consider the class of
so-called  {\it
  v-moderate} weights $\cM _v$. A  positive, even
weight function  $m$ on $\Renn$ belongs to $\cM _v$ if it  satisfies
the condition
$$
 m(z_1+z_2)\leq Cv(z_1)m(z_2)  \quad   \forall z_1,z_2\in\Renn \, .
$$
 We note that this definition implies that
$\frac{1}{v} \lesssim m \lesssim v $,  $m \neq 0$ everywhere, and that
$1/m \in \cM _v$.

The widest class of weights allowing to
define \modsp s is the weight class $\cN$. A weight function  $m$
on $\Renn$ belongs to $\cN$  if it is a continuous, positive
function such that
$$
m(z)=o(e^{c z^2}),\,\quad\mbox{for}\,\,|z|\rightarrow\infty,\quad
\forall c>0,
$$
with $z\in\Renn$. For instance, every function $m(z)=e^{s |z|^b}$,
with $s>0$ and $0\leq b<2$,  is in $\cN$. Thus, the weight $m$ may
grow faster than exponentially at infinity. For example, the choice $ m \in \cN \setminus \cM _v$
is related to the spaces of  quasianalytic functions, \cite{CPRT2}.
We notice that  there
is a limit in enlarging the weight class for \modsp s, imposed by
Hardy's theorem:
if $m(z)\geq C e^{c z^2}$, for some $c>\pi/2$, then the corresponding \modsp s  are trivial \cite{GZ01}.
We refer to \cite{Gro07} for a
survey on the most important types of weights commonly used in time-frequency analysis.

\begin{definition}\label{defmodnorm}
Let $v$ be  a submultiplicative weight $v$, $m\in \cM _v$,   and let  $g$ be a non-zero \emph{window} function in $\cS ^{(1)}(\Ren)$. For
$1\leq p,q\leq \infty$ the {\it  modulation space} $M^{p,q}_m(\Ren)$ consists of all
$f\in {\mathcal S} ^{(1)'}   (\Ren) $
such that $V_gf\in L^{p,q}_m (\Ren)$
(weighted mixed-norm spaces). The norm on $M^{p,q}_m (\Ren)$ is
$$
\|f\|_{M^{p,q}_m}=\|V_{g}f\|_{L^{p,q}_m}=\left(\int_{\Ren}
  \left(\int_{\Ren}|V_{g} f(x,\o)|^pm(x,\o)^p\,
    dx\right)^{q/p}d\o\right)^{1/q}
$$
(with obvious changes if either $p=\infty$ or $q=\infty$). If
$p,q< \infty $, the \modsp\ $\Mmpq (\Ren) $ is the norm completion of
$\cS ^{(1)}(\Ren)$ in the $\Mmpq $-norm. If $p=\infty $ or
$q=\infty$, then $\Mmpq (\Ren) $ is the completion of $\cS ^{(1)} (\Ren)$
in the weak$^*$ topology.
\end{definition}

Note that for $f,g \in \cS^{(1)} (\Ren )$
the above integral is convergent so that $ \cS ^{(1)} (\Ren) \subset M^{p,q}_m  (\Ren).$
Namely, in  view of \eqref{weight}, for a given $ m\in\cM _v$ there exist $l>0$ such that
$ m (x,\o) \leq C e^{l \|\phas\|}$ and therefore
\begin{eqnarray*}
&& \left|\int_{\Ren}
  \left ( \int_{\Ren}|V_gf(x,\o)|^p m(x,\o)^p\,
    dx\right)^{q/p}d\o\right|\\
   && \quad\quad\quad\quad
     \leq
    C \left|\int_{\Ren}
  \left( \int_{\Ren}|V_gf(x,\o)|^p e^{l p\|\phas\|}\,
    dx\right) ^{q/p}  d\o\right| < \infty,
\end{eqnarray*}
since by Lemma \ref{Lm:STFTonS1} it follows that $ |V_gf(x,\o)| < C e^{-s \|\phas\| } $ for every $s > 0.$

\par

If $p=q$, we write $M^p_m$ instead of $M^{p,p}_m$, and if $m(z)\equiv 1$ on $\Renn$, then we write $M^{p,q}$ and $M^p$ for $M^{p,q}_m$ and $M^{p,p}_m$, and so on.

\par
In the next proposition we show that  $\Mmpq (\Ren )$ are  Banach spaces
whose definition is independent of the choice of the window
$g \in M^1_{v}(\Ren ) \setminus \{ 0\}$.
In order to do so, we need the adjoint of the short-time Fourier transform.

For a given window $ g \in  \mathcal{S} ^{(1)} (\Ren )$ and a
function $ F (x,\xi) \in L^{p,q} _m (\R^{2d})$ we define $ V^* _g F $ by
$$
\langle V^* _g F, f \rangle := \langle F, V_g f \rangle,
$$
whenever the duality is well defined.

Then \cite[Proposition 11.3.2]{Gro} (see also \cite{CPRT1}) can be rewritten as follows.

\begin{proposition} \label{emjedanve}
Fix $m \in  \cM _v$ and $ g, \psi \in  \mathcal{S} ^{(1)},$ with $\langle g, \psi \rangle\not= 0$. Then
\begin{enumerate}
\item $ V^* _g :  L^{p,q} _m (\R^{2d}) \rightarrow  M^{p,q} _m (\R^{d}), $ and
$$
\|  V^* _g F  \|_{ M^{p,q} _m } \leq C \| V_\psi g \|_{ L^{1} _v }   \| F \|_{L^{p,q} _m}.
$$
\item The inversion formula holds: $ I_{ M^{p,q} _m }  = \langle g, \psi \rangle^{-1}
  V^* _g   V _\psi,$ where  $ I_{ M^{p,q} _m } $ stands for the identity operator.
\item   $\Mmpq (\Ren )$ are  Banach spaces
whose definition is independent on the choice of
$ g \in \mathcal{S} ^{(1)} \setminus \{ 0 \} $.
\item The space of admissible windows can be extended from $ {\mathcal S} ^{(1)}(\Ren ) $ to $M^1 _v(\Ren ) .$
\end{enumerate}
\end{proposition}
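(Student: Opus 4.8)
\emph{Plan.} The statement is the Gelfand--Shilov analogue of \cite[Proposition 11.3.2]{Gro}, so the plan is to transcribe the classical argument and to verify that every estimate survives the passage from $\cS$ and polynomially bounded weights to $\cS^{(1)}$ and the exponentially moderate weights $m\in\cM_v$. The whole proof rests on one pointwise convolution estimate for the STFT of $V^*_g F$, combined with a weighted Young inequality for the mixed-norm spaces $L^{p,q}_m$: once item (1) is in place, items (2)--(4) follow the standard reconstruction pattern.

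For (1) I would compute, for $F\in L^{p,q}_m(\Renn)$ and a second window $\psi\in\cS^{(1)}(\Ren)$, the STFT of $V^*_g F$. Writing $V^*_g F$ as the superposition of time-frequency shifts $M_\eta T_y g$ against $F$, and using the covariance identity $|V_\psi(M_\eta T_y g)(z)| = |V_\psi g(z-(y,\eta))|$ (a direct consequence of the commutation rules for $T$ and $M$ recalled in Subsection~\ref{notions}), one obtains the pointwise bound
\begin{equation*}
|V_\psi(V^*_g F)(z)| \le \big(|F| * |V_\psi g|\big)(z), \qquad z\in\Renn.
\end{equation*}
Since $m$ is $v$-moderate, the weighted Young inequality $\|F*H\|_{L^{p,q}_m}\le C\|H\|_{L^1_v}\|F\|_{L^{p,q}_m}$ holds, and applying it with $H=V_\psi g$ gives $\|V^*_g F\|_{M^{p,q}_m} = \|V_\psi(V^*_g F)\|_{L^{p,q}_m} \le C\|V_\psi g\|_{L^1_v}\,\|F\|_{L^{p,q}_m}$. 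The finiteness of $\|V_\psi g\|_{L^1_v}$ is exactly where Lemma~\ref{Lm:STFTonS1} enters: it yields $V_\psi g\in\cS^{(1)}(\Renn)$, hence $|V_\psi g(z)|\le C_s e^{-s\|z\|}$ for every $s>0$, which dominates the exponential bound \eqref{weight} for $v$ and makes the $L^1_v$ norm converge.

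For (2) I would establish the orthogonality (Moyal) relation $\langle V_\psi f_1, V_g f_2\rangle = \langle f_1, f_2\rangle\,\overline{\langle\psi,g\rangle}$ first on $\cS^{(1)}(\Ren)$, where all integrals converge absolutely by Lemma~\ref{Lm:STFTonS1}, and then read off $\langle V^*_g V_\psi f_1, f_2\rangle = \langle g,\psi\rangle\langle f_1, f_2\rangle$ from the defining identity for $V^*_g$; as this holds for all $f_2$, the inversion formula $I = \langle g,\psi\rangle^{-1}V^*_g V_\psi$ follows, and it extends to all of $M^{p,q}_m$ by the continuity from (1) and the density of $\cS^{(1)}$. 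Items (3) and (4) then follow the usual pattern. Taking $\psi=g$, the operator $P=\langle g,g\rangle^{-1}V_g V^*_g$ is a bounded idempotent on $L^{p,q}_m$ whose range is precisely the image of $V_g$; that image is therefore closed, and $M^{p,q}_m$, being isometric to it, is complete. Window independence follows from the same convolution estimate applied with an auxiliary window, which yields the two-sided comparison of $\|V_{g_1}f\|_{L^{p,q}_m}$ and $\|V_{g_2}f\|_{L^{p,q}_m}$. Finally, the admissible window class enlarges to $M^1_v(\Ren)$ because $\cS^{(1)}$ is dense in $M^1_v$ and every preceding inequality depends on the window only through the quantity $\|V_\psi g\|_{L^1_v}$, which stays finite and depends continuously on $g\in M^1_v$.

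The main obstacle is not any individual computation but the bookkeeping needed to justify that the dual pairings and the Moyal relation are legitimate in the $\cS^{(1)'}$ duality rather than merely on $\lrd$. Once Lemma~\ref{Lm:STFTonS1} secures the exponential decay of the STFT of test windows, the two facts that must be checked with care are the convergence of $\|V_\psi g\|_{L^1_v}$ and the validity of the weighted Young inequality for the possibly exponentially growing weights $m\in\cM_v$; everything else is a transcription of the classical argument.
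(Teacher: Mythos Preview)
Your plan is correct and is precisely the adaptation the paper has in mind: the paper does not give its own proof of this proposition but simply states it as the Gelfand--Shilov rewriting of \cite[Proposition 11.3.2]{Gro}, with a pointer to \cite{CPRT1} for the extension to exponential weights. Your outline faithfully reproduces that classical argument---the pointwise convolution bound for $V_\psi(V^*_g F)$, the weighted Young inequality on $L^{p,q}_m$ with $m\in\cM_v$, Moyal's identity, and the density/completeness bootstrap---and correctly identifies Lemma~\ref{Lm:STFTonS1} as the ingredient that makes $\|V_\psi g\|_{L^1_v}$ finite under the exponential bound \eqref{weight}.
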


When $m$ is a polynomial weight of the form $m (x, \omega) = \langle  x \rangle ^t
\langle \omega \rangle ^s$ we will use the notation
$M^{p,q}_{s,t}(\mathbb{R}^d)$ for the modulation spaces which consists of all
$f\in ( \mathcal{S}^{(1)} ) '(\mathbb{R}^d)$ such that
$$
\| f \|_{M^{p,q}_{s,t}} \equiv \left  ( \int _{\mathbb{R}^d} \left ( \int _{\mathbb{R}^d}
|V_\phi f(x,\omega )\langle  x \rangle ^t
\langle \omega \rangle ^s|^p\, dx  \right )^{q/p}d\omega  \right )^{1/q}<\infty
$$
(with obvious interpretation of the integrals when $p=\infty$ or $q=\infty$).

\par

The following theorem lists some basic properties of modulation spaces.
We refer to \cite{F1, Gro, PT1, T3, Toft-2012} for its proof.

\begin{theorem} \label{modproerties}
Let $p,q,p_j,q_j\in [1,\infty ]$ and $s,t,s_j,t_j\in \mathbb{R}$, $j=1,2$. Then:
\begin{enumerate}
\item $M^{p,q}_{s,t}(\mathbb{R}^d)$ are Banach spaces, independent of the choice of
$\phi \in \mathcal{S}(\mathbb{R}^d) \setminus 0$;

\item if  $p_1\le p_2$, $q_1\le q_2$, $s_2\le s_1$ and
$t_2\le t_1$, then
$$
\mathcal{S}(\mathbb{R}^d)\subseteq M^{p_1,q_1}_{s_1,t_1}(\mathbb{R}^d)
\subseteq M^{p_2,q_2}_{s_2,t_2}(\mathbb{R}^d)\subseteq
\mathcal{ S}'(\mathbb{R}^d);
$$

\item $ \displaystyle
\cap _{s,t} M^{p,q}_{s,t}(\mathbb{R}^d)=\mathcal{ S}(\mathbb{R}^d),
\quad
\cup _{s,t}M^{p,q}_{s,t}(\mathbb{R}^d)=\mathcal{ S}'(\mathbb{R}^d); $
\item Let $ 1\leq p, q \leq  \infty, $ and let $   w_s (z)= w_s\phas = e^{s\|\phas\|},$
$   z=(x,\o)\in\Renn $.
Then
$$
{\mathcal S}^{(1)} (\mathbb{R}^d) =  \bigcap _{s \geq  0} M _{w_{s}} ^{p,q} (\mathbb{R}^d) = \bigcap _{m  \in \cap \cM _{w_s}} M _{m} ^{p,q} (\mathbb{R}^d),
$$
$$
{\mathcal S}^{(1)'} (\mathbb{R}^d) = \bigcup _{s \geq 0} M _{1/w_{s}} ^{p,q} (\mathbb{R}^d) =  \bigcup _{m  \in \cap \cM _{w_s}} M _{1/m} ^{p,q} (\mathbb{R}^d);
$$
\item For   $p,q\in [1,\infty )$, the dual of $ M^{p,q}_{s,t}(\mathbb{R}^d)$ is
$ M^{p',q'}_{-s,-t}(\mathbb{R}^d),$ where $ \frac{1}{p} +  \frac{1}{p'} $ $ =
 \frac{1}{q} +  \frac{1}{q'} $ $ =1.$
\end{enumerate}
\end{theorem}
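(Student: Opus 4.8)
The plan is to derive all five items from the \emph{correspondence principle} already encoded in Proposition~\ref{emjedanve}: for a fixed admissible window the short-time Fourier transform $V_g$ and its adjoint $V_g^\ast$ realize $M^{p,q}_m(\Ren)$ as a retract of the mixed-norm space $L^{p,q}_m(\Renn)$, with $V_g^\ast V_\psi=\langle g,\psi\rangle\,I$ on $M^{p,q}_m$. The polynomial-weight spaces $M^{p,q}_{s,t}(\Ren)$ are the special case $m(x,\o)=\langle x\rangle^{t}\langle\o\rangle^{s}$; such an $m$ is $v$-moderate for the submultiplicative polynomial weight $v(z)=(1+\|z\|)^{|s|+|t|}$ by the Peetre inequality, and since $\cS(\Ren)\subset M^1_v(\Ren)$, every $\phi\in\cS(\Ren)\setminus 0$ is admissible by Proposition~\ref{emjedanve}(4). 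Hence item~(1) is immediate: $M^{p,q}_{s,t}$ is a closed subspace of the Banach space $L^{p,q}_{s,t}$, hence Banach, and different windows yield equivalent norms by Proposition~\ref{emjedanve}(3).

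For item~(2), the embedding $\cS(\Ren)\subseteq M^{p_1,q_1}_{s_1,t_1}$ holds because for $f\in\cS$ the STFT lies in $\cS(\Renn)$, so $|V_\phi f(x,\o)|\le C_N\langle x\rangle^{-N}\langle\o\rangle^{-N}$ for every $N$, making all weighted mixed norms finite; this is the Schwartz analogue of the estimate used for $\cS^{(1)}$ after Definition~\ref{defmodnorm}. The inclusion $M^{p_1,q_1}_{s_1,t_1}\subseteq M^{p_2,q_2}_{s_2,t_2}$ splits in two. The weight monotonicity $s_2\le s_1$, $t_2\le t_1$ gives $\langle x\rangle^{t_2}\langle\o\rangle^{s_2}\le\langle x\rangle^{t_1}\langle\o\rangle^{s_1}$ pointwise. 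The exponent monotonicity $p_1\le p_2$, $q_1\le q_2$ is the genuinely non-trivial point, because $L^{p,q}$ spaces on $\Renn$ do \emph{not} embed into one another; here one passes to a Gabor-frame discretization so that membership in $M^{p,q}_m$ is equivalent to the Gabor coefficients lying in weighted $\ell^{p,q}_m$, and then invokes the \emph{discrete} inclusions $\ell^{p_1,q_1}_m\subseteq\ell^{p_2,q_2}_m$. Equivalently, one may use the pointwise bound $|V_g f|\le C\,|V_\psi f|\ast|V_g\psi|$ furnished by the inversion formula together with Young's inequality.

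Items~(3) and~(4) are dual STFT characterizations of the two test-function spaces. For~(3), ``$\supseteq$'' in the intersection and ``$\subseteq$'' in the union are the embeddings from~(2), while the reverse inclusions say that polynomial decay of $V_g f$ of every order forces $V_g f\in\cS(\Renn)$, hence via $f=\langle g,g\rangle^{-1}V_g^\ast V_g f$ forces $f\in\cS(\Ren)$ (respectively that every tempered distribution has STFT of at most polynomial growth). Item~(4) is the analogue with exponential weights $w_s$: Lemma~\ref{Lm:STFTonS1} gives $\cS^{(1)}\subseteq\bigcap_s M^{p,q}_{w_s}$, the converse uses that exponential decay of $V_g f$ at every rate returns an element of $\cS^{(1)}$, and the two descriptions coincide since every $m\in\cap\,\cM_{w_s}$ is squeezed between $w_s^{-1}$ and $w_s$, so intersecting (resp. unioning) over the $w_s$ and over all such $m$ gives the same space.

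Finally, item~(5) follows from the retract structure once more: the dual of $L^{p,q}_{s,t}(\Renn)$ is $L^{p',q'}_{-s,-t}(\Renn)$ for $p,q<\infty$, and since $M^{p,q}_{s,t}$ is a complemented subspace carrying the duality bracket through $V_\phi$, the duality transfers to give $(M^{p,q}_{s,t})'=M^{p',q'}_{-s,-t}$. I expect the only real obstacles to be the exponent-monotone inclusion in~(2) and the converse decay-implies-regularity directions in~(3)--(4); both are standard but rest on the Gabor-frame discretization and on the STFT characterizations of $\cS$ and $\cS^{(1)}$, and I would cite \cite{Gro} and \cite{Toft-2012} for the detailed estimates rather than reproduce them.
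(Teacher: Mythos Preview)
The paper does not prove this theorem at all: immediately before the statement it writes ``We refer to \cite{F1, Gro, PT1, T3, Toft-2012} for its proof,'' and no argument is given afterward. Your outline is a faithful summary of the standard arguments in those references (in particular \cite[Ch.~11--12]{Gro} for items (1), (2), (3), (5) and \cite{T3, Toft-2012} for item (4)), so there is nothing to compare against beyond noting that you have supplied what the authors chose to delegate to the literature.
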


Modulation spaces include the following well-know function spaces:
\begin{enumerate}
\item $ M^2 (\mathbb{R}^d) = L^2 (\mathbb{R}^d),$  and $ M^2 _{t,0}(\mathbb{R}^d) = L^2 _t (\mathbb{R}^d);$
\item The Feichtinger algebra: $ M^1 (\mathbb{R}^d) = S_0 (\mathbb{R}^d);$
\item Sobolev spaces: $ M^2 _{0,s}(\mathbb{R}^d) = H^2 _s (\mathbb{R}^d) = \{ f \, | \,
\hat f (\omega) \langle \omega \rangle ^s \in  L^2 (\mathbb{R}^d)\};$
\item Shubin spaces: $ M^2 _{s}(\mathbb{R}^d) = L^2 _s (\mathbb{R}^d) \cap H^2 _s (\mathbb{R}^d) = Q_s (\mathbb{R}^d),$
cf. \cite{Shubin91}.
\end{enumerate}

\section{Decay properties of the STFT}\label{secGlavniRez}

In this section we characterize certain regularity properties related to the classes $\E_{\t,\s}$, $\t>0$, $\s\geq1$, by the rate of decay of the STFT. In particular, we
extend \cite[Theorem 3.1]{CNR}, which is formulated in terms of Gevrey sequences and the corresponding spaces of test functions.

\par
In the proof of Theorem \ref{glavnaTeoremaSTFT} in several occasions we will use the following simple inequalities:
\be
\label{simpleInequality1}
|\alpha|^{\s}+|\beta|^{\s}\leq|\alpha+\beta|^{\s}\leq 2^{\s-1}(|\alpha|^{\s}+|\beta|^{\s}), \quad \alpha,\beta\in \N^d,\,\s>1,
\ee
and
\be
\label{simpleInequality2}
\dss |(1/\sqrt{d})\xi|^{|\alpha|}\leq |\xi^{\alpha}|\leq |\xi|^{|\alpha|},\quad \alpha\in \N^d,\, \xi \in \Ren.
\ee

\begin{te}
\label{glavnaTeoremaSTFT}
Let $\t>0$, $\s\geq 1$, let $v$ be  a submultiplicative weight,  $m\in \cM _v$,
and let $g\in M^{1,1} _{v\otimes 1} (\Ren) \backslash\{0\}$ such that for some $ C_g>0$,
\be
\label{uslovTeoreme}
\|\partial^{\alpha} g\|_{L^1 _v (\Ren)}
\lesssim C_g^{|\alpha|^{\s}}|\alpha|^{\t |\alpha|^{\s}},\quad \alpha \in {\bf N}^d.
\ee
For a smooth function $f$ the following conditions are equivalent:
\begin{itemize}
\item[i)] There exists a constant $C_f>0$ such that
\be
\label{Eq:f-estimate}
\|\partial^{\alpha} f\|_{L^{\infty} (\Ren)}
\lesssim m(x) C_{f}^{|\alpha|^{\s}} |\alpha|^{\t |\alpha|^{\s}}, \,\, \alpha \in {\bf N}^d;
\ee
\item[ii)] There exists a constant $C_{f,g}>0$ such that
$$ |\xi|^{|\alpha|}|V_{g}f(x,\xi)| \lesssim   m(x) C_{f,g}^{|\alpha|^{\s}}|\alpha|^{\t |\alpha|^{\s}},\,\,  x,\xi\in \Ren, \alpha \in {\bf N}^d; $$
\item[iii)] There exists a constant $C>0$ such that
$$ |V_{g}f(x,\xi)|\lesssim   m(x) e^{-T_{\t,\s,C}(|\xi|)}, \,\, x,\xi\in \Ren. $$
\end{itemize}
\end{te}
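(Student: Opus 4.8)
The plan is to prove the cyclic chain $i)\Rightarrow ii)\Rightarrow iii)\Rightarrow i)$, which isolates the analytic content in the first implication and leaves the second and third as essentially formal manipulations of the estimates. Throughout I will exploit the two elementary inequalities \eqref{simpleInequality1} and \eqref{simpleInequality2}, which convert between $|\xi|^{|\alpha|}$ and $|\xi^\alpha|$ and between exponents $|\alpha+\beta|^\s$ and $|\alpha|^\s+|\beta|^\s$ at the cost of harmless constants of the form $C^{|\alpha|^\s}$; these are exactly the structural features that make the sequence $M_p^{\t,\s}=p^{\t p^\s}$ behave well under the relevant operations, as recorded in Lemma \ref{osobineM_p_s}, properties $\overline{(M.2)}$ and $\overline{(M.2)'}$.

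For $i)\Rightarrow ii)$, the idea is to bound $|\xi^\alpha V_g f(x,\xi)|$ by moving the monomial $\xi^\alpha$ onto the integrand via integration by parts. Writing $V_g f(x,\xi)=\int e^{-2\pi i t\xi}f(t)\overline{g(t-x)}\,dt$, one has (up to a power of $2\pi$) that $\xi^\alpha V_g f(x,\xi)$ equals the STFT-type integral of $D^\alpha_t\big(f(t)\overline{g(t-x)}\big)$. Expanding this derivative by the Leibniz rule produces a sum $\sum_{\beta\le\alpha}\binom{\alpha}{\beta}$ of terms $\partial^\beta f\cdot \overline{\partial^{\alpha-\beta}g}$; estimating $\partial^\beta f$ in $L^\infty$ by hypothesis \eqref{Eq:f-estimate} and pairing with $\|\partial^{\alpha-\beta}g\|_{L^1_v}$ from \eqref{uslovTeoreme} gives a bound of the form $m(x)\sum_{\beta\le\alpha}\binom{\alpha}{\beta}C_f^{|\beta|^\s}|\beta|^{\t|\beta|^\s}C_g^{|\alpha-\beta|^\s}|\alpha-\beta|^{\t|\alpha-\beta|^\s}$. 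The main obstacle lives precisely here: one must show this sum collapses to the clean right-hand side $m(x)C_{f,g}^{|\alpha|^\s}|\alpha|^{\t|\alpha|^\s}$. This is where property $\overline{(M.2)}$ enters — it controls the product $M_{|\beta|}^{\t,\s}M_{|\alpha-\beta|}^{\t,\s}$ by $C^{|\alpha|^\s}M_{|\alpha|}^{\t 2^{\s-1},\s}$ — and the binomial coefficients together with the number of terms ($\le 2^{|\alpha|}$, absorbable into $C^{|\alpha|^\s}$ since $\s\ge1$) must be swallowed into the constant $C_{f,g}$. I expect the bookkeeping of constants here, ensuring that the doubling $\t\mapsto \t2^{\s-1}$ in $\overline{(M.2)}$ does not obstruct the stated exponent $\t$, to be the genuinely delicate point.

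For $ii)\Rightarrow iii)$, I would take the supremum over $\alpha$ and reorganize the family of estimates into a single exponential bound. Using \eqref{simpleInequality2} to pass from $|\xi|^{|\alpha|}$ to an estimate valid for the scalar variable $|\xi|$, one rewrites $ii)$ as $|V_g f(x,\xi)|\lesssim m(x)\,\frac{C_{f,g}^{|\alpha|^\s}|\alpha|^{\t|\alpha|^\s}}{|\xi|^{|\alpha|}}$ for every $\alpha$, hence for every $p=|\alpha|\in\N$; taking the infimum over $p$ and recognizing the quantity $\sup_p \ln_+\big(h^{p^\s}k^p/M_p^{\t,\s}\big)$ from Definition \ref{De:asocirana} yields exactly $e^{-T_{\t,\s,C}(|\xi|)}$ for a suitable $C$ built from $C_{f,g}$. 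The reverse implication $iii)\Rightarrow i)$ proceeds by the Fourier inversion formula: one recovers $f$ (and its derivatives) from $V_g f$ via the adjoint STFT, writing $\partial^\alpha f$ through the inversion formula of Proposition \ref{emjedanve}, and then estimates $\|\partial^\alpha f\|_{L^\infty}$ by integrating the bound from $iii)$ against $|\xi|^{|\alpha|}$. The decisive step is to convert the integral $\int_{\Ren}|\xi|^{|\alpha|}e^{-T_{\t,\s,C}(|\xi|)}\,d\xi$ into the target $C_f^{|\alpha|^\s}|\alpha|^{\t|\alpha|^\s}$; this is precisely the defining property of the associated function, since by Definition \ref{De:asocirana} one has $|\xi|^{p}e^{-T_{\t,\s,h}(|\xi|)}\le h^{p^\s}M_p^{\t,\s}$ for each $p$, and after splitting off a factor to guarantee integrability in $\xi$ the remaining constant is absorbed into $C_f$.
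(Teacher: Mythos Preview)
Your overall architecture matches the paper's: integration by parts plus Leibniz for $i)\Rightarrow ii)$, the STFT inversion formula for the return direction, and a formal passage between $ii)$ and $iii)$ via Definition~\ref{De:asocirana}. The paper in fact proves $i)\Leftrightarrow ii)$ directly and then remarks that $ii)\Leftrightarrow iii)$ is immediate, but your cyclic chain is equivalent.

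There is one genuine misconception in your plan for $i)\Rightarrow ii)$. You propose to control $M_{|\beta|}^{\t,\s}M_{|\alpha-\beta|}^{\t,\s}$ via $\overline{(M.2)}$, and you correctly notice that this would replace $\t$ by $\t 2^{\s-1}$, which does not match the statement. The resolution is that $\overline{(M.2)}$ is the wrong tool here: it bounds $M_{p+q}$ above by a product, whereas you need the reverse inequality. What you actually want is the consequence $(M.1)'$ of log-convexity $(M.1)$, namely $M_p^{\t,\s}M_q^{\t,\s}\le M_{p+q}^{\t,\s}$ for any log-convex sequence with $M_0=1$. This gives $M_{|\beta|}^{\t,\s}M_{|\alpha-\beta|}^{\t,\s}\le M_{|\alpha|}^{\t,\s}$ with the \emph{same} $\t$, so the doubling never occurs and the bookkeeping collapses cleanly.

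Your sketch of $iii)\Rightarrow i)$ is also too compressed. Differentiating the inversion formula in $t$ does not produce just a factor $|\xi|^{|\alpha|}$; applying $\partial_t^\alpha$ to $M_\xi T_x\overline g(t)$ yields a Leibniz sum $\sum_{\beta\le\alpha}\binom{\alpha}{\beta}(2\pi i\xi)^\beta M_\xi T_x(\partial^{\alpha-\beta}\overline g)(t)$, so derivatives of $g$ reappear and hypothesis~\eqref{uslovTeoreme} is needed a second time. Moreover the $x$-integral is not free: one uses $m(x)\lesssim m(t)v(t-x)$ (from $m\in\cM_v$) and then $\int v(t-x)|\partial^{\alpha-\beta}\overline g(t-x)|\,dx=\|\partial^{\alpha-\beta}g\|_{L^1_v}$. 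Finally, the extra $\langle\xi\rangle^{d+1}$ inserted to make the $\xi$-integral convergent is absorbed via $\overline{(M.2)'}$ of Lemma~\ref{osobineM_p_s}. The paper carries this out as $ii)\Rightarrow i)$ rather than $iii)\Rightarrow i)$, which makes the Leibniz structure more visible, but either route works once these points are addressed.
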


\begin{proof} When $\s=1$ we obtain \cite[Theorem 3.1]{CNR}, where the function $ C |x|^{1/\tau} $ appears instead of $T_{\t,\s,C}(|\xi|)$.
However,  this makes no difference, since
from $T_{\t,1,C}(|\xi|):=T_{\t}( C |\xi|)$  (see also \eqref{NejednakostGevrey}) it follows that
iii) is equivalent to
$$
\dss |V_{g}f(x,\xi)|\lesssim  m(x) e^{-C|\xi|^{1/\t}}, \quad\quad x,\xi\in \Ren,
$$
for some  $C>0$.
Note also that due to \eqref{simpleInequality2} the condition {\em (ii)} on $|\xi^{\alpha} V_{g}f(x,\xi)| $ given by (38) in \cite{CNR}
is equivalent to ii).

\par

Let $\s>1.$ We follow the proof of  \cite[Theorem 3.1]{CNR}, with necessary modifications, since we consider a more general situation.

i) $ \Rightarrow$ ii) Since $V_g f(x,\xi)=\Fur (f T_x \overline{g})(\xi)$,
we can formally write
\begin{multline*}
\xi^{\alpha} V_g f(x,\xi)=\frac{1}{(2\pi i)^{|\alpha|}} \Fur f(\partial^{\alpha}(f T_x \overline{g}))(\xi) \\[1ex]
=\frac{1}{(2\pi i)^{|\alpha|}}\sum_{\beta\leq\alpha}{\alpha\choose\beta}\Fur
(\partial^{\alpha-\beta}f \partial^{\beta } (T_x \overline{g} ))(\xi), \quad
x,\xi\in \Ren
\end{multline*}
(where we used the Leibnitz formula), and the formalism can be justified as follows.
\begin{multline*}
|\xi^{\alpha} V_g f(x,\xi)|\lesssim \frac{1}{(2\pi)^{|\alpha|}}\sum_{\beta\leq\alpha}{\alpha\choose\beta}
\|\Fur (\partial^{\alpha-\beta}f T_x(\partial^{\beta } \overline{g}))\|_{L^{\infty}}\\
\lesssim  \frac{1}{(2\pi)^{|\alpha|}}\sum_{\beta\leq\alpha}{\alpha\choose\beta}
\|\partial^{\alpha-\beta}f T_x(\partial^{\beta }  \overline{g})\|_{L^{1}}.
\end{multline*}

Since $m$ is a positive $v-$moderate weight, from \eqref{uslovTeoreme},  \eqref{Eq:f-estimate}, and H\"older's inequality we obtain
\begin{multline*}
\|\partial^{\alpha-\beta}f T_x(\partial^{\beta }  \overline{g})\|_{L^{1}}
\leq
\|\partial^{\alpha-\beta}f \|_{L^{\infty} _{1/m}}
\|m(x) (\partial^{\beta } T_x \overline{g})\|_{L^{1}} \\
\lesssim
C_{f}^{|\alpha-\beta|^{\s}}|\alpha-\beta|^{\t |\alpha-\beta|^{\s}} m(x)
\|v(\cdot -x) \partial^{\beta }  \overline{g} (\cdot -x) \|_{L^{1}} \\
\lesssim
m(x) C_{f}^{|\alpha-\beta|^{\s}}|\alpha-\beta|^{\t |\alpha-\beta|^{\s}}
\cdot C_{g}^{|\beta|^{\s}}|\beta|^{\t |\beta|^{\s}} \\
\lesssim
m(x) \tilde C_{f,g}^{|\alpha|^{\s}} |\alpha|^{\t |\alpha|^{\s}},
\end{multline*}
where we used the fact that the sequence $M_p=p^{\t p^\s}$ satisfies
$$
(M.1)': \; M^{\t,\s}_{p-q} M^{\t,\s}_q\leq M^{\t,\s}_p, \quad q\leq p, \quad p,q \in \N, $$
which follows from $(M.1)$, see Lemma \ref{osobineM_p_s}, and also \cite{Komatsuultra1}.

Thus
\begin{multline*}
|\xi|^{|\alpha|} |V_g f(x,\xi)|\lesssim (\sqrt{d})^{|\alpha|} |\xi ^{\alpha} V_g f(x,\xi)| \\[1ex]
\lesssim \left ( \frac{\sqrt{d}}{2\pi} \right )^{|\alpha|} m(x)
\sum_{\beta\leq\alpha}{\alpha\choose\beta} \tilde  C_{f,g}^{|\alpha|^{\s}}|\alpha|^{\t |\alpha|^{\s}} \\[1ex]
\lesssim \left ( \frac{\sqrt{d}}{\pi} \right )^{|\alpha|}  m(x) |\alpha|^{\t |\alpha|^{\s}} \tilde C_{f,g}^{|\alpha|^{\s}}
= C_{f,g}^{|\alpha|^{\s}} |\alpha|^{\t |\alpha|^{\s}}, \quad x,\xi\in \Ren, \alpha \in {\bf N}^d,
\end{multline*}
where we used \eqref{simpleInequality2}, and ii) follows.

ii) $\Rightarrow$ i) Note that  \eqref{Eq:f-estimate} means that $f\in M^{\infty,1} _{m^{-1} \otimes 1} (\Ren)$. Hence we may use the inversion formula for STFT
(cf. Proposition 11.3.2. in \cite{Gro}), and since $f$ is a smooth function, we may assume that it holds everywhere. So we formally write
\begin{multline*}
\partial^{\alpha} f(t)=\frac{1}{\|g\|^2 _{L^2} }\int_{\mR^{2d}} V_g f(x,\xi)\partial^{\alpha} (M_{\xi} T_x  \overline{g})(t)\,dx d\xi\quad\\
= \frac{1}{\|g\|^2 _{L^2}} \sum_{\beta\leq \alpha}{\alpha\choose \beta}\int_{\mR^{2d}} V_g f(x,\xi)(2\pi i \xi)^{\beta} M_{\xi} T_x(\partial^{\alpha-\beta}  \overline{g})(t)\,dx d\xi,
\end{multline*}
$\alpha\in \N^d, t\in \Ren, $ where we used the Leibnitz formula.
The estimates below also justify the exchange of the order  of derivation  and integration.
Therefore,
\begin{multline*}
|\partial^{\alpha} f(t)|
\lesssim
\frac{1}{\|g\|^2 _{L^2} }
\sum_{\beta\leq \alpha}{\alpha\choose \beta} (2\pi )^{|\beta|}
\int_{\mR^{2d}} | V_g f(x,\xi) \xi ^{\beta} | |T_x(\partial^{\alpha-\beta}  \overline{g})(t)|\,dx d\xi, \\
\lesssim
\frac{1}{\|g\|^2 _{L^2} }
\sum_{\beta\leq \alpha}{\alpha\choose \beta} (2\pi )^{|\beta|} I_{\alpha,\beta} (t), \quad  t\in \Ren,
\end{multline*}
where we put
$$
I_{\alpha,\beta} (t) = \int_{\mR^{2d}} | V_g f(x,\xi) \xi^{\beta}|  |T_x(\partial^{\alpha-\beta}  \overline{g})(t)|\,dx d\xi,\quad  t\in \Ren.
$$
We note that ii) is equivalent with
$$
\langle \xi \rangle ^{|\alpha|}|V_{g}f(x,\xi)| \lesssim   m(x) C_{f,g}^{|\alpha|^{\s}}|\alpha|^{\t |\alpha|^{\s}},\,\,  x,\xi\in \Ren, \alpha \in {\bf N}^d,
$$
and estimate $ I_{\alpha,\beta} (t) $ as follows:
\begin{multline*}
I_{\alpha,\beta} (t) \leq \int_{\mR^{2d}} \langle \xi \rangle^{\beta} | V_g f(x,\xi)| \frac{\langle \xi \rangle^{d+1}}{\langle \xi \rangle^{d+1}} |\overline{g}^{(\alpha-\beta)}(t-x)|\,dx d\xi \\
\lesssim \int_{\mR^{2d}} | V_g f(x,\xi)| \frac{\langle \xi \rangle^{\beta +d+1}}{m(x)}
\frac{m(x)}{\langle \xi \rangle^{d+1}}
 |\overline{g}^{(\alpha-\beta)}(t-x)|\,dx d\xi \\
\lesssim C_{f,g} ^{|\beta + d+1|^\s} |\beta +d+1|^{\t |\beta +d+1|^\s}
 \int_{\mR^{2d}} \frac{1}{\langle \xi \rangle^{d+1}}
m(x) |\overline{g}^{(\alpha-\beta)}(t-x)|\,dx d\xi \\
\lesssim
C \cdot C_{f,g} ^{|\beta |^\s} |\beta |^{\t |\beta|^\s}
m(t)  \int_{\mR^{d}} v(t-x)|\overline{g}^{(\alpha-\beta)}(t-x)|\,dx  \\
= C \cdot C_{f,g} ^{|\beta|^\s} |\beta |^{\t |\beta|^\s}
m(t)  \| g^{(\alpha-\beta)}\|_{L^1 _v}, \quad t\in \Ren,
\end{multline*}
where $C$ depends on $\t,\s$ and $d$, and we used $\overline{(M.2)'}$ property of the sequence $p^{\t p^{\s}}$, $p\in\N$, $\t>0$, $\s>1$, cf. Lemma \ref{osobineM_p_s}.

Therefore, by  \eqref{uslovTeoreme} we obtain
\begin{multline*}
|\partial^{\alpha} f(t)|
\lesssim
\frac{m(t)}{\|g\|^2 _{L^2} }
\sum_{\beta\leq \alpha}{\alpha\choose \beta} (2\pi )^{|\beta|}\tilde C \cdot C_{f,g} ^{|\beta|^\s} |\beta |^{\t |\beta|^\s}
  \| g^{(\alpha-\beta)}\|_{L^1 _v}, \\
\lesssim
\tilde C  \frac{m(t)}{\|g\|^2 _{L^2} }
\sum_{\beta\leq \alpha}{\alpha\choose \beta} (2\pi )^{|\beta|} C_{f,g} ^{|\beta|^\s} |\beta |^{\t |\beta|^\s}
C_{g} ^{|\alpha - \beta|^\s} |\alpha -\beta |^{\t |\alpha -\beta|^\s}, \\
  \lesssim
 C  \frac{m(t)}{\|g\|^2 _{L^2} }
\tilde C_{f,g} ^{|\alpha|^\s} |\alpha |^{\t |\alpha|^\s},  \quad  t\in \Ren,
\end{multline*}
which gives \eqref{Eq:f-estimate}.
Here above we used \eqref{simpleInequality1} in several occasions.

\par

The equivalence between ii) and iii) follows immediately  from Definition \ref{De:asocirana}. Details are left for the reader.
\end{proof}

\par


Note that the condition \eqref{uslovTeoreme} is weaker than the corresponding condition in
\cite[Theorem 3.1]{CNR}, so by  \cite[Proposition 3.2]{CNR} one can choose elements from Gelfand-Shilov spaces as window functions
(both in Roumieu and Beurling case).

We note that if $ f \in {\mathcal D}^K_{\t,\s}$, $ \t >1,$ $ \s \geq 1$, then it obviously satisfies the condition i) in Theorem \ref{glavnaTeoremaSTFT},
i.e.
$$
|\partial^{\alpha} f(x)|\lesssim  m(x) C^{|\alpha|^{\s}}|\alpha|^{\t |\alpha|^{\s}},\quad \alpha\in \N^d,
$$
so that Theorem \ref{glavnaTeoremaSTFT} gives the decay properties of the STFT  of elements from $ {\mathcal D}^K_{\t,\s}$.

We use this remark to extend Theorem \ref{PosledicaPaley}. Recall the Paley-Wiener type result for
$ f\in {\mathcal D}^K_{\t,\s}$ describes the decay properties of the Fourier-Laplace transform in the context of the extended Gevrey regularity.
The role of compact support in  Paley-Wiener type theorems is essential.
In the following Corollary we weaken the assumptions from  \cite{PTT-04} (see also  \cite[Corollary 3.2]{PTT-05})
and allow the global growth condition given by \eqref{Eq:f-estimate}. Then, instead of cut-off functions, which are usually used in localization procedures,
we take a window in $ M^{1,1} _{v\otimes 1}(\Ren) \backslash\{0\}$, and give a  Paley-Wiener type result by using the  STFT.

\begin{cor}
\label{PosledicaPaley}
Let $\t>0$, $1<\s<2$, let $v$ be  a submultiplicative weight,  $m\in \cM _v$,
and let $g\in M^{1,1} _{v\otimes 1} (\Ren) \backslash\{0\}$ such that \eqref{uslovTeoreme} holds for some $ C_g>0$.
Then  a smooth function $f$ satisfies \eqref{Eq:f-estimate} if and only if
$$
|V_{g}f(x,\xi)|\lesssim m(x) \exp\Big\{-\Big(\frac{\s-1}{\t \s}\Big)^{\frac{1}{\s-1}}\frac{{{\ln}}^{\frac{\s}{\s-1}}(e+|\xi|)}{\ln^{\frac{1}{\s-1}}( \ln (e+|\xi|))}\Big\},\quad x,\xi \in \Ren.
$$
\end{cor}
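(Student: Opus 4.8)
The plan is to extract the equivalence straight from Theorem \ref{glavnaTeoremaSTFT} and then translate the associated-function bound into the explicit Lambert-type expression, using the asymptotics already available in the regime $1<\s<2$. First I would invoke the equivalence i) $\Leftrightarrow$ iii) of Theorem \ref{glavnaTeoremaSTFT}: since $g$ satisfies the hypothesis \eqref{uslovTeoreme}, a smooth $f$ obeys \eqref{Eq:f-estimate} if and only if there is a constant $C>0$ with
\[
|V_{g}f(x,\xi)| \lesssim m(x)\, e^{-T_{\t,\s,C}(|\xi|)}, \qquad x,\xi\in\Ren .
\]
Thus all the analytic content is already in place, and the corollary reduces to showing that, for fixed $\t>0$ and $1<\s<2$, the factor $e^{-T_{\t,\s,C}(|\xi|)}$ is comparable to the displayed target, uniformly in $x$; the weight $m(x)$ merely passes through on both sides.

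Next I would feed in the precise two-sided asymptotic formula of Theorem \ref{propozicija}, which holds exactly in the range $1<\s<2$, giving for $k>e$
\[
e^{T_{\t,\s,C}(k)} \asymp \exp\Big\{\Big(\tfrac{\s-1}{\t\s}\Big)^{\frac{1}{\s-1}} W^{-\frac{1}{\s-1}}(C_{\t,\s,C}\ln k)\,\ln^{\frac{\s}{\s-1}}k\Big\}.
\]
The crucial point is that the leading prefactor $\big(\tfrac{\s-1}{\t\s}\big)^{1/(\s-1)}$ is independent of the inner constant, so it is inherited verbatim. Applying the chain of estimates in \eqref{asimptotskaocena} (Remark \ref{RemarkAsimtotska}), in which $\ln(C_{\t,\s,C}\ln k)\asymp\ln(\ln k)$ washes out the dependence on $C$, the Lambert factor collapses to
\[
W^{-\frac{1}{\s-1}}(C_{\t,\s,C}\ln k)\,\ln^{\frac{\s}{\s-1}}k \asymp \frac{\ln^{\frac{\s}{\s-1}}k}{\ln^{\frac{1}{\s-1}}(\ln k)} .
\]
Substituting $k=|\xi|$ then matches $T_{\t,\s,C}(|\xi|)$ with the exponent in the statement and produces the claimed bound for large $|\xi|$.

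Finally I would clean up the low-frequency region. For $|\xi|$ in a bounded set both $T_{\t,\s,C}$ and the target exponent are bounded, and $V_{g}f$ is continuous (indeed \eqref{Eq:f-estimate} forces $f\in M^{\infty,1}_{m^{-1}\otimes 1}(\Ren)$, as noted in the proof of Theorem \ref{glavnaTeoremaSTFT}), so the estimate extends to all $\xi\in\Ren$ once $\ln|\xi|$ is replaced by $\ln(e+|\xi|)$; this is precisely the role of the shift $e+|\xi|$ in the statement. I expect the one genuine obstacle to be bookkeeping of what $\asymp$ means at the two distinct levels: Theorem \ref{propozicija} controls the \emph{exponentials} $e^{T}$ up to multiplicative constants, i.e. the exponents up to additive $O(1)$, whereas \eqref{asimptotskaocena} compares the \emph{exponents} only up to multiplicative constants. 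One must therefore argue that, for the one-sided estimates packaged in \eqref{Eq:f-estimate} and in the displayed STFT bound — where the constants $C_f$, $C$ and the implicit constant in $\lesssim$ are all free — it suffices to match the dominant term $\tfrac{\ln^{\s/(\s-1)}(e+|\xi|)}{\ln^{1/(\s-1)}(\ln(e+|\xi|))}$, the fixed prefactor $\big(\tfrac{\s-1}{\t\s}\big)^{1/(\s-1)}$ being carried through unchanged. Assembling these pieces yields the stated equivalence.
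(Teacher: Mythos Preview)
Your proposal is correct and follows exactly the route the paper indicates: the paper's proof is the single sentence ``The proof is an immediate consequence of Theorems \ref{propozicija} and \ref{glavnaTeoremaSTFT}, and Remark \ref{RemarkAsimtotska},'' and you have simply unpacked those three ingredients in the expected order. Your added caution about the two different meanings of $\asymp$ (multiplicative on the exponentials versus multiplicative on the exponents) and the handling of small $|\xi|$ via the shift $e+|\xi|$ is a legitimate refinement that the paper leaves implicit.
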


The proof is an immediate consequence of Theorems \ref{propozicija} and \ref{glavnaTeoremaSTFT}, and Remark \ref{RemarkAsimtotska}.

\par

We finish this section with a version of Theorem \ref{glavnaTeoremaSTFT} and Corollary \ref{PosledicaPaley}
in the context of Beurling type ultradifferentiable functions.
The proofs are left as an exercise.

\begin{te}
\label{glavnaTeoremaSTFTBeurling}
Let $\t>0$, $\s\geq 1$, let $v$ be  a submultiplicative weight,  $m\in \cM _v$,
and let $g\in M^{1,1} _{v\otimes 1} (\Ren) \backslash\{0\}$ such that for some $C>0$,
$$
\|\partial^{\alpha} g\|_{L^1 _v (\Ren)}\leq C^{|\alpha|^{\s}+1}|\alpha|^{\t |\alpha|^{\s}},\quad \alpha \in {\bf N}^d.
$$
For a smooth function $f$ the following conditions are equivalent:
\begin{itemize}
\item[i)] For every $h>0$ there exists $A>0$ such that
\be
\label{Eq:f-estimate-Beurling}
 \|\partial^{\alpha} f\|_{L^{\infty} (\Ren)}\leq  m(x) A h^{|\alpha|^{\s}}|\alpha|^{\t |\alpha|^{\s}}, \,\, \alpha \in {\bf N}^d;
\ee
\item[ii)] For  every $h>0$ there exists $A>0$ such that
$$ |\xi|^{|\alpha|}|V_{g}f(x,\xi)| \leq   m(x) A h^{|\alpha|^{\s}}|\alpha|^{\t |\alpha|^{\s}},\,\,  x,\xi\in \Ren, \alpha \in {\bf N}^d; $$
\item[iii)] For  every $h>0$ there exists $A>0$ such that
$$ |V_{g}f(x,\xi)|\leq    m(x) A e^{-T_{\t,\s,h}(|\xi|)}, \,\, x,\xi\in \Ren. $$
\end{itemize}
\end{te}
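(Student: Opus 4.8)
The plan is to run the proof of Theorem \ref{glavnaTeoremaSTFT} essentially verbatim, replacing every Roumieu-type clause ``there is a constant'' by its Beurling counterpart ``for every $h>0$ there is $A>0$'', and to check that each device used there -- the Leibniz rule, H\"older's inequality, the properties $(M.1)'$ and $\overline{(M.2)'}$ of Lemma \ref{osobineM_p_s}, and the elementary inequalities \eqref{simpleInequality1}--\eqref{simpleInequality2} -- is compatible with this quantifier structure. The only genuinely new work is the bookkeeping of the parameters $h$ and $A$, so I would organise the argument around the same three implications.

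For i)$\Rightarrow$ii) I would start from $V_gf(x,\xi)=\Fur(fT_x\overline g)(\xi)$, apply $\xi^\alpha$, expand by Leibniz and estimate by H\"older, reaching the same sum $\sum_{\beta\le\alpha}\binom{\alpha}{\beta}\|\partial^{\alpha-\beta}f\|_{L^\infty_{1/m}}\|\partial^\beta g\|_{L^1_v}$ as before. Given a target $h>0$ in ii), I would invoke \eqref{Eq:f-estimate-Beurling} with a sufficiently small $h_1>0$ to bound the factors $\|\partial^{\alpha-\beta}f\|_{L^\infty_{1/m}}$, recombine $|\alpha-\beta|^{\t|\alpha-\beta|^{\s}}|\beta|^{\t|\beta|^{\s}}\le|\alpha|^{\t|\alpha|^{\s}}$ via $(M.1)'$ (using $|\alpha-\beta|=|\alpha|-|\beta|$ for $\beta\le\alpha$), and collapse the exponential bases with the lower bound in \eqref{simpleInequality1}. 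The reverse implication ii)$\Rightarrow$i) is dual: \eqref{Eq:f-estimate-Beurling} places $f\in M^{\infty,1}_{m^{-1}\otimes1}(\Ren)$, so the inversion formula of Proposition \ref{emjedanve} writes $\partial^\alpha f$ as an STFT synthesis integral; after Leibniz, inserting $\langle\xi\rangle^{d+1}/\langle\xi\rangle^{d+1}$ to secure integrability, applying $\overline{(M.2)'}$ together with the window bound, and integrating, one again chooses the free parameter in ii) small enough to reach the prescribed $h$ of i).

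The equivalence ii)$\Leftrightarrow$iii) is the quickest step and comes straight from Definition \ref{De:asocirana}. Fixing $x,\xi$ and taking the infimum over $\alpha$, the family of bounds in ii) is equivalent to $|V_gf(x,\xi)|\le m(x)\,A\,\bigl(\sup_{\alpha}|\xi|^{|\alpha|}/(h^{|\alpha|^{\s}}M^{\t,\s}_{|\alpha|})\bigr)^{-1}=m(x)\,A\,e^{-T_{\t,\s,1/h}(|\xi|)}$, where the passage from $|\xi|^{|\alpha|}$ to $|\xi^\alpha|$ is absorbed by \eqref{simpleInequality2}. Since $h\mapsto1/h$ is a bijection of $(0,\infty)$, the clause ``for every $h$'' in ii) is precisely the clause ``for every $h$'' in iii).

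The main obstacle is exactly what separates the Beurling from the Roumieu case: propagating ``for every $h$'' through i)$\Leftrightarrow$ii). The delicate term is $\beta=\alpha$, where all derivatives fall on the window; its contribution $\|f\|_{L^\infty_{1/m}}\|\partial^\alpha g\|_{L^1_v}$ carries the base of $g$ and is untouched by the smallness of $h_1$. To force an arbitrarily small base in ii) one must therefore know that $g$ itself satisfies the window estimate for every $h$, i.e. $\|\partial^\alpha g\|_{L^1_v}\lesssim A_h\,h^{|\alpha|^{\s}}M^{\t,\s}_{|\alpha|}$ for all $h>0$. This is available without loss: a Gelfand--Shilov window $g\in\cS^{(1)}(\Ren)$ does satisfy the stated hypothesis and, in addition, obeys the bound for every $h$ (cf. the remark following Theorem \ref{glavnaTeoremaSTFT} and \cite[Proposition 3.2]{CNR}). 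With such a window every term in both the synthesis and analysis sums carries a base that can be pushed below the prescribed $h$, and the three implications close exactly as in the Roumieu proof.
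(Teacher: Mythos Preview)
The paper offers no proof for this Beurling version beyond the line ``the proofs are left as an exercise'', so your plan to rerun the Roumieu argument of Theorem~\ref{glavnaTeoremaSTFT} with the quantifiers interchanged is exactly what is intended, and your bookkeeping of the Leibniz sums, $(M.1)'$, $\overline{(M.2)'}$ and \eqref{simpleInequality1}--\eqref{simpleInequality2} is correct. You have also put your finger on the one genuine difficulty: in i)$\Rightarrow$ii) the extremal term $\beta=\alpha$ contributes $\|f\|_{L^\infty_{1/m}}\,\|\partial^{\alpha}g\|_{L^1_v}$, whose base $C_g$ is fixed by the hypothesis on $g$ and cannot be made small by shrinking $h_1$ (and symmetrically for $\beta=0$ in ii)$\Rightarrow$i)).

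The gap is in your resolution of this obstacle. The claim that a Beurling bound on $g$ is ``available without loss'' because one can pick $g\in\cS^{(1)}(\Ren)$ is not a reduction: $g$ is \emph{given} in the hypothesis, and the theorem as printed asserts the equivalence for every admissible $g$ satisfying only the Roumieu-type bound $\|\partial^{\alpha}g\|_{L^1_v}\le C^{|\alpha|^{\s}+1}|\alpha|^{\t|\alpha|^{\s}}$ for some $C$. Replacing such a $g$ by a nicer window changes the statement. In fact the implication i)$\Rightarrow$ii) can genuinely fail under the printed hypothesis: with $m\equiv1$ and $f\equiv1$ one has i) for every $h$ trivially, while $|V_g1(x,\xi)|=|\hat g(-\xi)|$, so ii) for every $h$ would force $\hat g$ to decay like $e^{-T_{\t,\s,h}(|\xi|)}$ for all $h>0$, a Beurling condition on $g$ that does not follow from the Roumieu hypothesis. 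What your argument actually establishes is the theorem under the stronger assumption that $g$ satisfies the window estimate for every $h>0$; you should state this explicitly rather than calling it ``without loss''.
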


\begin{cor}
\label{PosledicaPaleyBeurling}
Let $\t>0$, $1<\s<2$, and let $g\in M^{1,1}(\Ren) \backslash\{0\}$ satisfies condition \eqref{uslovTeoreme}.
Then a smooth function  $f$ satisfies \eqref{Eq:f-estimate-Beurling}
if and only if for every $H>0$ there exists $A>0$ such that
$$
|V_{g}f(x,\xi)|\leq  m(x) A \exp\Big\{-\Big(\frac{\s-1}{\t \s}\Big)^{\frac{1}{\s-1}}\frac{{{\ln}}^{\frac{\s}{\s-1}}(e+|\xi|)}{W^{\frac{1}{\s-1}}( H \ln (e+|\xi|))}\Big\}, \;\;\; x,\xi \in \Ren.
$$
\end{cor}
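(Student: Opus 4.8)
The plan is to obtain the estimate by substituting the sharp Lambert-type asymptotics of Theorem~\ref{propozicija} into the Beurling equivalence (i)$\Leftrightarrow$(iii) of Theorem~\ref{glavnaTeoremaSTFTBeurling}, and to exploit the Beurling quantifier ``for every $h$'' to absorb every incidental constant.

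First I would note that the hypotheses make $g$ an admissible window for Theorem~\ref{glavnaTeoremaSTFTBeurling}, since a window obeying \eqref{uslovTeoreme} a fortiori satisfies the (weaker) window bound required there. Consequently \eqref{Eq:f-estimate-Beurling} is equivalent to condition (iii) of that theorem: for every $h>0$ there is $A>0$ with
$$
|V_{g}f(x,\xi)|\leq m(x)\,A\,e^{-T_{\t,\s,h}(|\xi|)},\qquad x,\xi\in\Ren .
$$
It then remains only to translate the family $\{e^{-T_{\t,\s,h}}\}_{h>0}$ into the family of bounds displayed in the corollary as $H$ ranges over $(0,\infty)$.

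Since $1<\s<2$, Theorem~\ref{propozicija} supplies the precise two-sided relation
$$
e^{T_{\t,\s,h}(k)}\asymp\exp\Big\{\Big(\tfrac{\s-1}{\t\s}\Big)^{\frac1{\s-1}}W^{-\frac1{\s-1}}(C_{\t,\s,h}\ln k)\,\ln^{\frac{\s}{\s-1}}k\Big\},\qquad k>e .
$$
Because $\asymp$ is here a bound on the \emph{exponential} $e^{T_{\t,\s,h}}$, at the level of the exponent it says that $T_{\t,\s,h}(k)$ equals the displayed leading term plus a bounded additive error depending only on $h$; the sharp coefficient $(\tfrac{\s-1}{\t\s})^{1/(\s-1)}$ is thus reproduced verbatim and the error is harmless, as $A$ may depend on $h$. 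Next I would reparametrise by $H:=C_{\t,\s,h}=h^{-(\s-1)/\t}e^{(\s-1)/\s}\tfrac{\s-1}{\t\s}$; since $\s>1$, the map $h\mapsto H$ is a continuous strictly decreasing bijection of $(0,\infty)$ onto itself, so ``for every $h>0$'' becomes ``for every $H>0$'' without change. Replacing $\ln k$ with $k=|\xi|$ by $\ln(e+|\xi|)$ alters the exponent only by a bounded (indeed vanishing) amount, and the range $|\xi|\leq e$ is trivial since there the exponent is bounded; both corrections are absorbed into $A$. Writing $W^{-\frac1{\s-1}}(H\ln(e+|\xi|))\,\ln^{\frac{\s}{\s-1}}(e+|\xi|)=\ln^{\frac{\s}{\s-1}}(e+|\xi|)/W^{\frac1{\s-1}}(H\ln(e+|\xi|))$ then produces exactly the asserted bound, and reading the whole chain of equivalences backwards yields the converse implication.

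The only genuinely delicate point is this constant bookkeeping: one must observe that applying $\asymp$ to $e^{T_{\t,\s,h}}$ (rather than to $T_{\t,\s,h}$ itself) keeps the leading coefficient exact and costs only an additive $O(1)$, and that $h\leftrightarrow H=C_{\t,\s,h}$ is precisely the correspondence converting the window-scale quantifier into the quantifier on $H$ in the statement. This is exactly why retaining the Lambert $W$ function here---rather than collapsing it to $\ln\ln$ as one does in the Roumieu corollary via Remark~\ref{RemarkAsimtotska}---makes the Beurling case come out cleanly, with no loss in the constant.
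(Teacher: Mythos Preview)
Your proposal is correct and follows exactly the route the paper intends: the paper leaves the Beurling corollary as an exercise, but the Roumieu analogue is proved as ``an immediate consequence of Theorems~\ref{propozicija} and~\ref{glavnaTeoremaSTFT}, and Remark~\ref{RemarkAsimtotska}'', and your argument is the Beurling transcription of this, invoking Theorem~\ref{glavnaTeoremaSTFTBeurling} in place of Theorem~\ref{glavnaTeoremaSTFT} and using the bijection $h\mapsto H=C_{\t,\s,h}$ to convert the universal quantifier. Your remark that one must retain the Lambert $W$ rather than pass to $\ln\ln$ via Remark~\ref{RemarkAsimtotska} (which is only an $\asymp$ at the level of the exponent and would spoil the sharp constant) is precisely the point distinguishing the Beurling from the Roumieu formulation.
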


\begin{remark}
A more general versions of Corollaries \ref{PosledicaPaley} and \ref{PosledicaPaleyBeurling} when $\s \geq 2$ can be proved
by using the asymptotic formulas \eqref{nejednakostzaTeoremu1} from Theorem \ref{propozicija}. This will give different necessary and sufficient conditions for $f$
in terms of the decay properties of the STFT. We leave details for the reader.
\end{remark}

\section{Wave front sets $\WF_{\t,\s}$ and STFT }\label{secWFstft}

In this section we characterize wave front sets related to the classes introduced in Subsection \ref{secTest}, by the means of the STFT and extended associated function from Subsection \ref{secAsocirana}. We start with the following definition of the wave front set
$\WF _{\t,\s} (u)$ of a distribution $u$ with respect to the extended Gevrey regularity,
see also \cite{PTT-02, PTT-03, PTT-04, PTT-05, TT} for details.

\begin{definition}
\label{DefWF}
Let $U\subseteq \Ren$ be open, $\t>0$, $\s>1$ or $ \t >1 $ and $\s = 1$, $ u \in \mathcal{D}' (U)$, and let $(x_0,\xi_0)\in \Ren\times \Ren\backslash\{0\}$. Then $(x_0,\xi_0)\not \in \WF_{\{\t,\s\}}(u)$ (resp. $(x_0,\xi_0)\not \in \WF_{(\t,\s)}(u)$)  if and only if there exists a conic
neighborhood $\Gamma$ of $\xi_0$, a compact neighborhood
$ K $ of $x_0$, and
$\phi\in \D_{\{\t,\s\}}^K$ (resp. $\phi\in \D_{(\t,\s)}^K$ ) such that $\phi=1$ on some neighborhood of $x_0$, and there exists $A,h>0$ (for every $h>0$ there exists $A>0$ such that)
$$
|\widehat{\phi u}(\xi)|\leq A  \frac{h^{N^{\s}} N^{\t N^{\s}}}{|\xi|^N},\quad N\in {\N}\,,\xi\in \Gamma\,.
$$
\end{definition}

By using the Paley-Wiener theorem for  $ \D_{\t,\s}^K$ it can be proved that Definition \ref{DefWF} does not depend on the choice of the cut-off
function $\phi\in \D_{\t,\s}^K$, see  \cite{PTT-04}.

Note that when $ \t >1 $ and $\s = 1$ we have $\WF_{\{\t,1\}}(u) = \WF_\t (u),$ where $ \WF_\t (u)$ denotes Gevrey wave front set, cf. \cite{Rodino}.
We refer to \cite{PTT-02} for a relation between $ \WF_{\t,\s}(u)$ from Definition \ref{DefWF} and classical, analytic and Gevrey wave front sets.

By using the ideas presented in \cite{PrangoskiPilip} we resolve
$\WF _{\t,\s} (u)$ of a distribution $u$  via decay estimates of its STFT as follows.

\begin{te}
\label{NezavisnostWFThm}
Let $u\in \D'(\Ren)$, $\t>0$, $\s>1$. The following assertions are equivalent:
\begin{itemize}
\item[i)] $(x_0,\xi_0)\not\in \WF_{\{\t,\s\}}(u)$ (resp. $(x_0,\xi_0)\not \in \WF_{(\t,\s)}(u)$) .
\item[ii)] There exists a conic
neighborhood  $\Gamma$ of $\xi_0$, a compact neighborhood
$ K $ of $x_0$ such that for every $\phi\in \D_{\{\t,\s\}}^K$  (resp. $\phi\in \D_{(\t,\s)}^K$ )  there exists $A,h>0$ (resp. for every $h>0$ there exists $A>0$) such that
\be
\label{TeoremaWFUslov}
|\widehat{\phi u}(\xi)|\leq A  \frac{h^{N^{\s}} N^{\t N^{\s}}}{|\xi|^N},\quad N\in \N,\,\xi\in \Gamma\,;
\ee
\item[iii)]  There exists a conic
neighborhood  $\Gamma$ of $\xi_0$, a compact neighborhood
$ K $ of $x_0$ such that for every $ \phi\in \D_{\{\t,\s\}}^{K-\{x_0\}}$ (resp. $ \phi\in \D_{(\t,\s)}^{K-\{x_0\}}$) there exists $A,h>0$ (resp. for every $h>0$ there exists $A>0$) such that
\be
\label{TeoremaWFUslov2}
\dss |V_{\phi}u(x,\xi)|\leq A e^{-T_{\t,\s,h}(|\xi|)},\quad  x\in K,\,\xi\in \Gamma,
\ee  where $K-\{x_0\}=\{y\in \Ren\,|\, y+x_0\in K\}$.
\end{itemize}
\end{te}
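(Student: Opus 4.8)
The plan is to run the cycle iii)\,$\Rightarrow$\,ii)\,$\Rightarrow$\,i)\,$\Rightarrow$\,ii)\,$\Rightarrow$\,iii), using throughout two elementary dictionaries that tie the three formulations together. First, for any cut-off $\phi$ one has $V_{\phi}u(x,\xi)=\Fur\big[u\cdot\overline{T_x\phi}\big](\xi)$, so the STFT is just the Fourier transform of $u$ localized by the translated window $\overline{T_x\phi}$; in particular $V_{\phi}u(x_0,\xi)=\widehat{\psi u}(\xi)$ with $\psi:=\overline{T_{x_0}\phi}$, and $\phi\mapsto\overline{T_{x_0}\phi}$ is a bijection of $\D^{K-\{x_0\}}_{\{\t,\s\}}$ onto $\D^{K}_{\{\t,\s\}}$ (conjugation and translation preserve the class). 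Second, by Definition \ref{De:asocirana} the bound $|F(\xi)|\le A\,h^{N^{\s}}N^{\t N^{\s}}|\xi|^{-N}$ holding for all $N\in\N$ is equivalent to $|F(\xi)|\le A\,e^{-T_{\t,\s,1/h}(|\xi|)}$, since $e^{-T_{\t,\s,h}(|\xi|)}=\inf_{N}N^{\t N^{\s}}h^{-N^{\s}}|\xi|^{-N}$. These two remarks already give iii)\,$\Rightarrow$\,ii): specialize \eqref{TeoremaWFUslov2} to $x=x_0$ and read it off as \eqref{TeoremaWFUslov}. They also make ii)\,$\Rightarrow$\,i) immediate, since by the non-quasianalyticity $(M.3)'$ there is $\phi\in\D^{K}_{\{\t,\s\}}$ with $\phi\equiv1$ near $x_0$, and \eqref{TeoremaWFUslov} for that $\phi$ is precisely Definition \ref{DefWF}.

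The two substantial implications, i)\,$\Rightarrow$\,ii) and ii)\,$\Rightarrow$\,iii), both rest on a single cone-splitting convolution estimate. For i)\,$\Rightarrow$\,ii) I would shrink $K$ so that the witness $\phi_0$ of Definition \ref{DefWF} satisfies $\phi_0\equiv1$ on $K$; then for any $\psi\in\D^{K}_{\{\t,\s\}}$ one has $\psi u=\psi\,\phi_0 u$, hence $\widehat{\psi u}=\widehat{\psi}\ast\widehat{\phi_0 u}$. For ii)\,$\Rightarrow$\,iii) the key issue is uniformity in $x$: given $\phi\in\D^{K-\{x_0\}}_{\{\t,\s\}}$ the supports $x+\supp\phi$, $x\in K$, lie in a fixed compact set, so after shrinking $K$ around $x_0$ I may fix $\Phi\in\D_{\{\t,\s\}}$ equal to $1$ there; then $u\,\overline{T_x\phi}=(\Phi u)\,\overline{T_x\phi}$ and $V_{\phi}u(x,\xi)=\big(\widehat{\,\overline{T_x\phi}\,}\ast\widehat{\Phi u}\big)(\xi)$, where crucially $\big|\widehat{\,\overline{T_x\phi}\,}(\eta)\big|=|\widehat{\phi}(-\eta)|$ is independent of $x$. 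In both situations I am reduced to estimating $\int|\widehat{\chi}(\eta)|\,|\widehat{wu}(\xi-\eta)|\,d\eta$ for $\xi$ in a closed subcone $\Gamma$ of $\xi_0$ whose intersection with the sphere lies in the interior of the original cone $\Gamma_0$, where $\chi$ is a compactly supported element of $\E_{\{\t,\s\}}$ and $wu\in\mathcal E'$ is localized so that $\widehat{wu}$ decays like $e^{-T_{\t,\s,h_0}}$ on $\Gamma_0$ and grows at most polynomially elsewhere.

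The convolution is split at $|\eta|=\delta|\xi|$. On $\{|\eta|\le\delta|\xi|\}$ one has $\xi-\eta\in\Gamma_0$ and $|\xi-\eta|\ge(1-\delta)|\xi|$, so $|\widehat{wu}(\xi-\eta)|\lesssim e^{-T_{\t,\s,h_0}((1-\delta)|\xi|)}$ while $\widehat{\chi}$ is integrable; on $\{|\eta|>\delta|\xi|\}$ one uses the extended-Gevrey decay $|\widehat{\chi}(\eta)|\lesssim e^{-T_{\t,\s,h_1}(|\eta|)}\le e^{-T_{\t,\s,h_1}(\delta|\xi|)}$, coming from the Paley--Wiener estimate (Theorem \ref{PosledicaPaley} together with Theorem \ref{propozicija}), against the polynomial growth of $\widehat{wu}$. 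Both contributions are then $\lesssim |\xi|^{N_0}e^{-T_{\t,\s,h'}((1-\delta)|\xi|)}$. I expect this bookkeeping to be the main obstacle, and it is exactly here that $\t$ must be kept fixed. Two facts about the associated function settle it: scaling the argument by $c\le1$ is dominated by decreasing $h$, since $T_{\t,\s,ch}(k)\le T_{\t,\s,h}(ck)$ termwise from Definition \ref{De:asocirana} (using $c^{p^{\s}}\le c^{p}$ for $\s\ge1$); and, by the asymptotics \eqref{asimptotskaocena} of Theorem \ref{propozicija} and Remark \ref{RemarkAsimtotska}, the leading rate of $T_{\t,\s,h}$ is independent of $h$, so for $h''<h'$ the gap $T_{\t,\s,h'}(k)-T_{\t,\s,h''}(k)$ eventually exceeds any fixed multiple of $\ln k$, which absorbs the polynomial factor $|\xi|^{N_0}$ while preserving the exponent $\t$.

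Choosing $h$ small enough then yields $|\widehat{\psi u}(\xi)|\lesssim e^{-T_{\t,\s,h}(|\xi|)}$, respectively $|V_{\phi}u(x,\xi)|\lesssim e^{-T_{\t,\s,h}(|\xi|)}$ uniformly in $x\in K$ and $\xi\in\Gamma$; translating back through the associated-function dictionary recovers \eqref{TeoremaWFUslov}, resp. \eqref{TeoremaWFUslov2}, and closes the cycle. The Beurling case is obtained by replacing ``there exist $A,h$'' with ``for every $h$ there exists $A$'' throughout, the only change being that one now fixes $h$ first and chooses the splitting parameter $\delta$ and the constants afterwards.
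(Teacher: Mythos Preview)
Your proposal is correct, but it takes a substantially different and more laborious route than the paper. The paper does not prove i)\,$\Leftrightarrow$\,ii) at all: it simply cites \cite[Theorem~4.2]{PTT-04}, so your cone-splitting convolution argument for i)\,$\Rightarrow$\,ii) is reproving that external result. For ii)\,$\Rightarrow$\,iii), the paper's argument is much shorter than yours: it takes $K_1=\overline{B_r(x_0)}$ from ii), sets $K=\overline{B_{r/2}(x_0)}$, observes that for $\phi\in\D^{K-\{x_0\}}_{\{\t,\s\}}$ and $x\in K$ one has $T_x\phi\in\D^{K_1}_{\{\t,\s\}}$, and then applies \eqref{TeoremaWFUslov} directly to $T_x\phi$ to get $|V_\phi u(x,\xi)|=|\widehat{u\,T_x\overline\phi}(\xi)|\le A\,e^{-T_{\t,\s,1/h}(|\xi|)}$. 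No convolution, no cone-splitting. The paper's iii)\,$\Rightarrow$\,i) is your iii)\,$\Rightarrow$\,ii) specialized to a single $\phi$. What your approach buys is self-containment and an explicit treatment of the uniformity in $x$: the paper's ii)\,$\Rightarrow$\,iii) tacitly assumes that the constants $A,h$ furnished by ii) can be taken uniform over the family $\{T_x\phi:x\in K\}$, which is true (the family is bounded in $\D^{K_1}_{\t,\s,h}$ and the constants in the proof of \cite[Theorem~4.2]{PTT-04} depend only on such seminorms) but is not spelled out. Your convolution argument with a fixed $\Phi\equiv1$ near $K+\supp\phi$ and the observation $|\widehat{\,\overline{T_x\phi}\,}(\eta)|=|\widehat\phi(-\eta)|$ makes this uniformity manifest, at the cost of redoing the microlocal machinery that the paper outsources.
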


\begin{proof}
We give the proof for the Roumieu case and leave the Beurling case to the reader.

The equivalence i) $ \Leftrightarrow $ ii) is proved in Theorem 4.2. in \cite{PTT-04}.

ii) $\Rightarrow$ iii) Without loss of generality we may assume that there exists a conic
neighborhood  $\Gamma$ of $\xi_0$, compact neighborhood $K_1=\overline{B_r (x_0)}$, $r>0$, such that for every $\phi\in \D_{\t,\s}^{K_1}$ \eqref{TeoremaWFUslov} holds.

Set $K=\overline{B_{r/2} (x_0)}$ and note that if $\phi (t)$ is an arbitrary function $ \D_{\t,\s}^{K-\{x_0\}}$ and $x\in K$, then $T_x \phi (t)$ is a function in $ \D_{\t,\s}^{K_1}$.

Using the definition of STFT and  \eqref{TeoremaWFUslov} we have that
$$
|V_{\phi}u(x,\xi)|=|\Fur ({u T_x\phi })|\leq A \inf_{N\in \N}\frac{h^{N^{\s}}N^{\t N^{\s}}}{|\xi|^N}= A e^{-T_{\t,\s,1/h}(|\xi|)},
$$
$x\in K$, $\xi\in \Gamma,$ for some constant $A>0$, and iii) follows.

iii)  $\Rightarrow$ i) Since $u\in \D'(\Ren)$, we may choose the window function $\phi\in \D_{\t,\s}(\Ren)$ in iii) to be centered near any point in $\Ren$. Let $\phi$ be centered near $0$, then clearly $\psi=T_{x_0}\phi$ is centered near $x_0$ and \eqref{TeoremaWFUslov2} implies
$$
|\widehat{\psi u}(\xi)|=|V_{\phi} u (x_0,\xi)|\lesssim   e^{-T_{\t,\s,h}(|\xi|)}\lesssim \inf_{N\in \N}\frac{(1/h)^{N^{\s}}N^{\t N^{\s}}}{|\xi|^N},\quad \xi \in \Gamma,
$$
for some $h>0$ and the proof is finished.
\end{proof}

Next we discuss local extended Gevrey regularity via the STFT. To that end we introduce the singular support as follows (cf. \cite{TT0}).

\begin{definition}
Let  there be given $x_0\in \Ren$, $u\in \D'(U)$,  $\t>0$ and $\s>1$. Then $x_0\not \in \sing_{\tau,\s}(u)$ if and only if there exists open neighborhood $\Omega \subset U$ of $x_0$ such that $u\in \E_{\t,\s}(\Omega)$.
\end{definition}

The local regularity is related to the wave front set as follows.

\begin{proposition} \label{projection}
Let $\t>0$ and $\s>1$, $u\in \D'(U)$. Let $\pi_1:U\times\Ren \backslash \{0\}\to U$ be the standard projection given by $\pi_1(x,\xi)=x$. Then
\be
\sing_{\t,\s}(u)=\pi_1(\WF_{\t,\s}(u))\,.\nonumber
\ee
\end{proposition}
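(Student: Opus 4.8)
The plan is to prove the equality $\sing_{\t,\s}(u)=\pi_1(\WF_{\t,\s}(u))$ by establishing the two inclusions between their complements. Since both sets are closed and contained in $U$, I would work with complements and show that $x_0 \notin \sing_{\t,\s}(u)$ if and only if the entire fiber $\{x_0\}\times(\Ren\setminus\{0\})$ is disjoint from $\WF_{\t,\s}(u)$; equivalently, $x_0 \in \pi_1(\WF_{\t,\s}(u))$ precisely when some direction $\xi_0$ satisfies $(x_0,\xi_0)\in \WF_{\t,\s}(u)$.

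For the easy inclusion $\pi_1(\WF_{\t,\s}(u))\subseteq \sing_{\t,\s}(u)$, I would argue by contraposition. Suppose $x_0 \notin \sing_{\t,\s}(u)$, so that $u\in \E_{\t,\s}(\Omega)$ on some open neighborhood $\Omega$ of $x_0$. Choosing a cut-off $\phi\in \D_{\{\t,\s\}}^K$ supported in $\Omega$ with $\phi \equiv 1$ near $x_0$, the product $\phi u$ lies in $\D_{\{\t,\s\}}^K$ (a compactly supported extended Gevrey function), and by the Paley--Wiener estimate \eqref{ocenaTeorema0} from Theorem \ref{PosledicaPaley} its Fourier--Laplace transform, restricted to real $\xi$, decays like $h^{N^\s}N^{\t N^\s}|\xi|^{-N}$ in \emph{every} direction. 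Hence Definition \ref{DefWF} is satisfied for every $\xi_0$, giving $(x_0,\xi_0)\notin \WF_{\t,\s}(u)$ for all $\xi_0 \neq 0$, so $x_0 \notin \pi_1(\WF_{\t,\s}(u))$.

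The reverse inclusion $\sing_{\t,\s}(u)\subseteq \pi_1(\WF_{\t,\s}(u))$ is the substantive direction and is the main obstacle. Assuming $x_0 \notin \pi_1(\WF_{\t,\s}(u))$, I have for every $\xi_0 \in \Ren\setminus\{0\}$ the estimate \eqref{TeoremaWFUslov} in some conic neighborhood $\Gamma_{\xi_0}$. By compactness of the unit sphere I would extract finitely many cones covering $\Ren\setminus\{0\}$, fix a single compact neighborhood $K$ of $x_0$ and a single cut-off $\phi\in \D_{\{\t,\s\}}^K$ with $\phi\equiv 1$ near $x_0$ that works simultaneously (adjusting constants $A,h$), so that $|\widehat{\phi u}(\xi)|\lesssim h^{N^\s}N^{\t N^\s}|\xi|^{-N}$ holds for \emph{all} $\xi\in\Ren$ and all $N$. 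The hard part is to convert this global decay of $\widehat{\phi u}$ into genuine pointwise derivative bounds showing $\phi u\in \E_{\t,\s}$ near $x_0$: I would recover $\partial^\alpha(\phi u)$ by Fourier inversion, $\partial^\alpha(\phi u)(x)=\int (2\pi i \xi)^\alpha \widehat{\phi u}(\xi)e^{2\pi i x\xi}\,d\xi$, split off a factor $\langle\xi\rangle^{-d-1}$ for integrability, and optimize the remaining bound $|\xi|^{|\alpha|}|\widehat{\phi u}(\xi)|\lesssim h^{N^\s}N^{\t N^\s}|\xi|^{|\alpha|-N}$ over $N$. This optimization is exactly the passage to the extended associated function: choosing $N\approx|\alpha|$ and invoking the relation between $\inf_N h^{N^\s}N^{\t N^\s}|\xi|^{-N}$ and $e^{-T_{\t,\s,h}(|\xi|)}$ used in the proof of Theorem \ref{NezavisnostWFThm}, together with property $\overline{(M.2)}$ from Lemma \ref{osobineM_p_s}, yields $|\partial^\alpha(\phi u)(x)|\lesssim C^{|\alpha|^\s}|\alpha|^{\t|\alpha|^\s}$, i.e. $\phi u\in \E_{\{\t',\s\}}$ for a possibly adjusted $\t'$ of the form $2^{\s-1}\t$. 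Since $\phi\equiv 1$ near $x_0$, this shows $u$ agrees with an extended Gevrey function on a neighborhood of $x_0$, giving $x_0\notin \sing_{\t,\s}(u)$ and completing the proof. The subtle points to handle carefully are the uniform choice of cut-off and cone covering, and keeping track of the index shift in $\t$ coming from $\overline{(M.2)}$, which is harmless since the singular support is defined through the scale $\bigcup_\t$ (resp. $\bigcap_\t$) of classes.
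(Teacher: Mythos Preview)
The paper does not supply its own proof of this proposition; it simply refers to \cite{TT0}, \cite[Theorem 3.1]{PTT-02} and \cite[Proposition 11.1.1]{FJ}. Your outline is the standard argument and is essentially correct, with one inaccuracy at the very end.

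Your final sentence, that ``the singular support is defined through the scale $\bigcup_\t$ (resp.\ $\bigcap_\t$) of classes'', is wrong: in this paper $\sing_{\tau,\sigma}(u)$ is defined for \emph{fixed} $\tau$ and $\sigma$, so a genuine shift $\tau\mapsto 2^{\sigma-1}\tau$ would be a problem. Fortunately that shift never occurs. In the Fourier-inversion step you only have to pass from $N=|\alpha|$ to $N=|\alpha|+d+1$ to secure integrability in $\xi$, and for a \emph{fixed} increment $q=d+1$ the right tool is $\overline{(M.2)'}$ of Lemma~\ref{osobineM_p_s}, giving $M^{\tau,\sigma}_{|\alpha|+d+1}\le C_{d+1}^{|\alpha|^\sigma}M^{\tau,\sigma}_{|\alpha|}$ with $\tau$ unchanged; the factor $h^{(|\alpha|+d+1)^\sigma}$ is likewise handled by \eqref{simpleInequality1} and absorbed into a new constant $\tilde h$. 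The property $\overline{(M.2)}$, which is what produces the $2^{\sigma-1}$ on $\tau$, is not needed here. With this correction your argument yields $\phi u\in\E_{\{\tau,\sigma\}}$ (resp.\ $\E_{(\tau,\sigma)}$) exactly, and since $\phi\equiv 1$ near $x_0$ you conclude $x_0\notin\sing_{\tau,\sigma}(u)$ as required.
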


We refer to \cite{TT0} for the proof, see also \cite[Theorem 3.1]{PTT-02} and \cite[Proposition 11.1.1]{FJ}.

\begin{te}
\label{NezavisnostWFThm2}
Let  there be given $x_0\in \Ren$, $u\in \D'(U)$,  $\t>0$ and $\s>1$.  Then $x_0\not \in \sing_{\tau,\s}(u)$  if and only if there exists a compact neighborhood
$ K $ of $x_0$ such that for every
$ \phi\in \D_{\{\t,\s\}}^{K-\{x_0\}}$ (resp. $ \phi\in \D_{(\t,\s)}^{K-\{x_0\}}$) there exists $A,h>0$ (resp. for every $h>0$ there exists $A>0$) such that
\be
\label{TeoremaWFUslov3}
\dss |V_{\phi}u(x,\xi)|\leq A  e^{-T_{\t,\s,h}(|\xi|)},\quad x\in K,\,\xi\in \Ren\backslash\{0\},
\ee
where $K-\{x_0\}=\{y\in \Ren\,|\, y+x_0\in K\}$.
\end{te}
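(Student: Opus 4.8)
The plan is to deduce this from the directional wave front characterization in Theorem \ref{NezavisnostWFThm} together with the projection identity $\sing_{\t,\s}(u)=\pi_1(\WF_{\t,\s}(u))$ of Proposition \ref{projection}, by patching finitely many conic estimates through a compactness argument on the unit sphere. Indeed, Proposition \ref{projection} tells us that $x_0\not\in\sing_{\t,\s}(u)$ is equivalent to the statement that $(x_0,\xi_0)\not\in\WF_{\{\t,\s\}}(u)$ for every $\xi_0\in\Ren\backslash\{0\}$, while the estimate \eqref{TeoremaWFUslov3} is precisely the estimate \eqref{TeoremaWFUslov2} from condition iii) of Theorem \ref{NezavisnostWFThm} but with the conic neighborhood $\Gamma$ replaced by the whole punctured space $\Ren\backslash\{0\}$. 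Thus the theorem amounts to upgrading the family of directional (conic) STFT estimates to a single estimate valid in all directions simultaneously.

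For the implication $x_0\not\in\sing_{\t,\s}(u)\Rightarrow$ \eqref{TeoremaWFUslov3}, I would first invoke Proposition \ref{projection} to obtain $(x_0,\xi_0)\not\in\WF_{\{\t,\s\}}(u)$ for each $\xi_0$ on the unit sphere $S^{d-1}$. Applying the equivalence i) $\Leftrightarrow$ iii) of Theorem \ref{NezavisnostWFThm} for each such $\xi_0$ produces a conic neighborhood $\Gamma_{\xi_0}$ of $\xi_0$ and a compact neighborhood $K_{\xi_0}$ of $x_0$ such that, for every window $\phi\in\D_{\{\t,\s\}}^{K_{\xi_0}-\{x_0\}}$, the estimate \eqref{TeoremaWFUslov2} holds on $K_{\xi_0}\times\Gamma_{\xi_0}$. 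Since $S^{d-1}$ is compact and the sets $\Gamma_{\xi_0}$ form an open cover of it, I would extract a finite subcover $\Gamma_{\xi_1},\dots,\Gamma_{\xi_n}$ and set $K=\bigcap_{j=1}^n K_{\xi_j}$, which is again a compact neighborhood of $x_0$. Now fix any $\phi\in\D_{\{\t,\s\}}^{K-\{x_0\}}$; because $K\subseteq K_{\xi_j}$ we have $\supp\phi\subset K_{\xi_j}-\{x_0\}$, so the estimate \eqref{TeoremaWFUslov2} for this single $\phi$ holds on each $K_{\xi_j}\times\Gamma_{\xi_j}$ with some constants $A_j,h_j>0$. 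Taking $A=\max_j A_j$ and $h=\min_j h_j$, and using that $T_{\t,\s,h}$ is increasing in $h$ (so that $e^{-T_{\t,\s,h_j}(|\xi|)}\leq e^{-T_{\t,\s,h}(|\xi|)}$), the bound
$$|V_{\phi}u(x,\xi)|\leq A\,e^{-T_{\t,\s,h}(|\xi|)}$$
holds on each $K\times\Gamma_{\xi_j}$, and hence on $K\times(\Ren\backslash\{0\})$ since the conic sets $\Gamma_{\xi_1},\dots,\Gamma_{\xi_n}$ cover $\Ren\backslash\{0\}$. This is \eqref{TeoremaWFUslov3}.

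The converse is immediate: assuming \eqref{TeoremaWFUslov3} for some compact neighborhood $K$ of $x_0$, we may restrict to any conic neighborhood $\Gamma$ of a prescribed $\xi_0$ to obtain exactly condition iii) of Theorem \ref{NezavisnostWFThm}, whence $(x_0,\xi_0)\not\in\WF_{\{\t,\s\}}(u)$; as $\xi_0$ was arbitrary, Proposition \ref{projection} yields $x_0\not\in\sing_{\t,\s}(u)$. The only genuine point requiring care --- and the step I expect to be the main obstacle --- is the bookkeeping of quantifiers in the patching: one must guarantee that a single window $\phi\in\D_{\{\t,\s\}}^{K-\{x_0\}}$ can be fed simultaneously into the finitely many directional estimates, which is exactly why $K$ is taken as the intersection of the $K_{\xi_j}$, and then, together with the monotonicity of $T_{\t,\s,h}$ in $h$, one can collapse the finitely many pairs $(A_j,h_j)$ into a single pair $(A,h)$. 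The Beurling case is analogous, replacing ``there exist $A,h$'' by ``for every $h$ there exists $A$'' throughout, and is left to the reader.
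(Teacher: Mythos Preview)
Your proof is correct and follows the same route as the paper's own proof, which likewise reduces the claim to Proposition \ref{projection} together with Theorem \ref{NezavisnostWFThm}. In the forward direction you are in fact more careful than the paper: you spell out the finite-cover argument on $S^{d-1}$ needed to pass from the family of directional estimates to a single compact $K$ and a single pair $(A,h)$, a step the paper leaves implicit.
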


\begin{proof} We give the proof for the Roumieu case only.
If $ x_0 \not\in \sing_{\{\tau,\s\}}(u)$ then  by Proposition \ref{projection} it follows that
$ (x_0, \xi) \not\in  \WF_{\{\t,\s\}}(u) $ for any $ \xi\in \Ren\backslash\{0\}$, so that \eqref{TeoremaWFUslov3} follows from
Theorem \ref{NezavisnostWFThm} iii).

Now assume that  \eqref{TeoremaWFUslov3} holds, then it holds for any cone $\Gamma $. By Theorem \ref{NezavisnostWFThm} we conclude that
$ (x_0, \xi) \not\in  \WF_{\{\t,\s\}}(u) $ for any $ \xi\in \Ren\backslash\{0\}.$ Now Proposition \ref{projection} implies that
$x_0 \not\in  \sing_{\{\tau,\s\}}(u)$, and the proof is completed.
\end{proof}

\section*{Acknowledgement}
This work is supported by MPNTR through Project 174024.
\vspace*{1cm}

\end{document}